\documentclass[11pt, reqno]{amsart}
\usepackage{bbm}
\usepackage{amsfonts}
\usepackage{amssymb}
\usepackage{amsfonts,mathrsfs,amsmath,amssymb}

\setlength{\textwidth}{15.5cm} \setlength{\textheight}{20cm}
\setlength{\oddsidemargin}{0.0cm} \setlength{\evensidemargin}{0.0cm}

\theoremstyle{plain}

\numberwithin{equation}{section}

\newtheorem{theo}{Theorem}[section]
\newtheorem{pro}[theo]{Proposition}
\newtheorem{lem}[theo]{Lemma}
\newtheorem{cor}[theo]{Corollary}
\newtheorem{rem}[theo]{Remark}
\theoremstyle{definition}
\newtheorem{defi}[theo]{Definition}

\def\ot{\otimes}
\def\om{\omega}

\def\al{\alpha}

\def\be{\beta}
\def \bee{\begin{equation}\label}

\def\ga{\gamma}

\def\Om{\Omega}

\def\de{\delta}
\def\pa{\partial}

\def\la{\lambda}

\def\lra{\longrightarrow}

\def\k{\kappa}

\def\d{\mathbbm{d}}
\def\cc{\mathbbm{c}}

\newcommand{\N}{{\mathcal N}}
\newcommand{\C}{{\mathcal C}}

\newcommand{\B}{{\mathcal B}}

\newcommand{\Z}{{\mathcal Z}}

\newcommand{\A}{{\mathcal A}}

\def\c{{\bf c }}

\def\qed{\hfill\mbox{$\Box$}}

\def\bN{{\mathbb N}}
\def\bZ{{\mathbb Z}}

\def\bC{{\mathbb C}}

\def\P{{\mathcal P}}

\def\C{{\mathscr C}}

\def\mh{\mathfrak{h}}
\def\b{\mathfrak{b}}

\def\Ht{\mbox{\rm ht}}

\def\Ind{\mbox{\rm Ind}\,}

\def\End{\text{\rm End}}

\def\C{{\mathbb C}}

\def\Z{{\mathbb Z}}

\def\N{{\mathbb N}}
\def\1{{\bf 1}}
\def\b1{\mathbbold{1}}
\def\k{{\bf k}}
\def \End{{\rm End}}

\def\H{\widehat{H}_{4}}
\def\NO{\mbox{\,$\circ\atop\circ$}\,}
\begin{document}
\title[]
{Representations of affine Nappi-Witten algebras}

\author[Bao]{Yixin Bao}
\address{Department of Mathematics, Shanghai Jiaotong University,
Shanghai 200240, PR China}\email{wintervanilla84@gmail.com}

\author[Jiang]{Cuipo Jiang$^{*}$}
\thanks{$*$ Corresponding author  supported in part by the NSFC (10931006, 10871125), the Innovation
Program of Shanghai Municipal Education Commission (11ZZ18)}

\address{Department of Mathematics, Shanghai Jiaotong University, Shanghai
200240, China} \email{cpjiang@sjtu.edu.cn}

\author[Pei]{Yufeng Pei$^{**}$}
\address{Department of Mathematics, Shanghai Normal University,
Shanghai 200234, China}\email{peiyufeng@gmail.com}
\thanks{$**$ Supported in part by the NSFC (11026042,11071068), the Innovation
Program of Shanghai Municipal Education Commission (11YZ85), the
Academic Discipline Project of Shanghai Normal University
(DZL803),and ZJNSF(Y6100148)}

\date{}

\begin{abstract}
In this paper, we study the representation theory for the affine Lie
algebra $\H$ associated to the Nappi-Witten model $H_{4}$. We
classify all the irreducible highest weight modules of $\H$.
Furthermore, we give a necessary and sufficient condition for each
$\H$-(generalized) Verma module  to be irreducible.  For reducible
ones, we characterize all the linearly independent singular vectors.
Finally, we construct Wakimoto type modules for these Lie algebras
and interpret this construction in terms of vertex operator algebras
and their modules.

\end{abstract}

\subjclass[2000]{17B65, 17B67, 17B68, 17B69. }

\maketitle

\section{\bf Introduction}
Two-dimensional conformal field theories (CFTs) has many
applications both in physics and mathematics. Vertex operator
algebras \cite{Bo, DL, FLM, LL, Xu}  provide a powerful algebraic
tool to study the general structure of conformal field theory as
well as various specific models \cite{D, FZ, W} etc. One of the
richest classes of CFTs consists of the Wess-Zumino-Novikov-Witten
(WZNW) models \cite{Witten}, which were studied originally within
the framework of semsimple(abelian) groups. It can be shown that
most properties of these models are substantially captured by the
properties of the corresponding vertex operator algebras \cite{FZ}.
For non-reductive groups,  few results on WZNW models are known.
However, there has been a great interest in WZNW models based on
nonabelian nonsemisimple Lie groups \cite{NW,Lian,KK} in the early
1990, partly because they allow the construction of exact string
backgrounds. Nappi and Witten showed in \cite{NW} that a WZNW
model(NW model) based on a central extension of the two-dimensional
Euclidean group describes the homogeneous four-dimensional
space-time corresponding to a gravitational plane wave. The
corresponding Lie algebra $H_4$ is called Nappi-Witten algebra. Just
as the non-twisted affine Kac-Moody Lie algebras given in
\cite{Kac}, the affine Nappi-Witten algebra $\H$ is defined to be
the central extension of the loop algebra of $H_4$. The study of the
representation theory of $\H$ was started in \cite{KK}. Further
studies on NW model were presented in \cite{CFS,DQ1,DQ2,DK}.

In the present paper,  we initiate a systematic study of
representations of the affine Nappi-Witten algebra $\H$. We discuss
the structure of all  the (generalized) Verma modules and their
irreducible quotients. We  give a necessary and sufficient condition
for each Verma module to be irreducible. Furthermore, for the
reducible ones , we obtain a complete description for the linearly
independent singular vectors and classify all the irreducible
highest weight modules. After that we construct a simple vertex
operator algebra associated to the NW model and classify all
irreducible representations. It is known that the Wakimoto modules
for affine Kac-Moody Lie algebras have important applications in
conformal field theory, representation theory, integrable systems,
and many other areas of mathematics and physics(see \cite{FF,Sz,Wa}
and references therein). In this paper, we also construct a class of
Wakimoto type modules (free field realizations) for the affine NW
algebra $\H$ and interpret this construction in terms of vertex
(operator) algebras and their modules.

This paper is organized as follows. In section 2, we recall some
basic results on the NW algebras and their affinizations. In section
3, we investigate the structure of the (generalized) Verma modules
over the affine NW algebras. In section 4, we discuss the vertex
operator algebra structures associated to the representations of
affine NW algebras. In section 5, by the vertex algebras and their
modules obtained in section 4, we construct a class of Wakimoto type
modules (free field realizations) over the affine NW algebra.

Throughout the paper, we  use $\bC, \bZ$, $\bZ_{+}$ and $\bN$ to
denote the sets of the complex numbers, the integers, the positive
integers and the nonnegative integers, respectively.

\section{\bf  Nappi-Witten algebras and their affinizations }
\subsection{Nappi-Witten Lie algebras}
The {\it Nappi-Witten Lie algebra} $H_{4}$ is  a $4$-dimensional vector space
$$
H_{4}=\bC a\oplus \bC b\oplus\bC c\oplus\bC d
$$
equipped with the bracket relations
$$
[a,b]=c,\ \ [d,a]=a,\ \ [d,b]=-b,\ \ [c,d]=[c,a]=[c,b]=0.
$$
Let $(,)$ be a symmetric bilinear form on $H_{4}$ defined by
$$
(a,b)=1,\quad (c,d)=1,\quad\text{otherwise}, \ (\ ,\ )=0.
$$
It is straightforward to check  that $(\ ,\ )$ is a non-degenerate
$H_{4}$-invariant symmetric bilinear form on $H_{4}$. The Casimir
element is defined as
\begin{equation}
\Omega=ab+ba+cd+dc\in U(H_{4}),\label{Ca}
\end{equation}
where $U(H_{4})$ is the universal enveloping algebra of $H_{4}$.
For any $0\neq\ell\in\bC$, we introduce the following modified Casimir element :
\begin{eqnarray}
\tilde\Omega_{\ell}=\Om-\frac{1}{\ell} c^2\in U(H_{4}).
\end{eqnarray}
It is clear that both $\Om$ and $\tilde\Om_\ell$ lie in the center of $U(H_{4})$.
\subsection{$H_4$-modules}
The Nappi-Witten algebra $H_4$ is equipped with the triangular decomposition:
$$
H_4=H_4^+\oplus H_4^0\oplus H_4^-=H_4^{\geq0}\oplus H_4^{-},
$$
where
$$
H_4^+=\bC a,\quad H_4^0=\bC c\oplus\bC d,\quad H_4^-=\bC b.
$$
For $\la\in (H_4^0)^*$,  the  highest weight module (Verma module) of  $H_4$ is defined by
\begin{equation}
M(\la)=U(H_4)\ot_{U(H_4^{\geq0})}\bC_{\la},\label{Verma}
\end{equation}
where $\bC_{\la}$ is the $1$-dimensional $H_4^{\geq0}$-module, on
which $h\in H_4^0$ acts as multiplication by $\la(h)$, and $H_4^+$
acts as $0$. For convenience, we denote $M(\la)$ by $M(\cc,\d)$ when
$\la(c)=\cc$ and $\la(d)=\d$.

The following lemma is well-known:

\begin{lem}[cf.\cite{MP}]
For $\cc,\d\in\bC$, $M(\cc,\d)$ is irreducible if and only if
$\cc\neq0$. If $\cc=0$, then the irreducible quotient
$L(\d)=L(0,\d)=\bC v_{\d}$ such that
\begin{equation}
av_{\d}=bv_\d=cv_{\d}=0,\quad dv_{\d}=\d v_{\d}.\label{Trivial}
\end{equation}
\end{lem}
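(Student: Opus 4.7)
The plan is to use the PBW theorem to present $M(\cc,\d)$ explicitly as $\bC[b]v_{\la}$, where $v_{\la}$ denotes the canonical highest weight generator, and then analyze irreducibility through the action of the raising operator $a$.

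First I would compute, using $[a,b]=c$, $[d,b]=-b$ and the centrality of $c$, the action of the generators on the basis vectors $b^{n}v_{\la}$ ($n\geq 0$). The key identity is $[a,b^{n}]=nc\,b^{n-1}$, which follows by induction from $[a,b]=c$ and $[c,b]=0$. This yields
\begin{eqnarray*}
 a\cdot b^{n}v_{\la}=n\cc\, b^{n-1}v_{\la},\quad c\cdot b^{n}v_{\la}=\cc\, b^{n}v_{\la},\quad d\cdot b^{n}v_{\la}=(\d-n)b^{n}v_{\la}.
\end{eqnarray*}
In particular, the weights $\{\d-n\mid n\in\bN\}$ under $d$ are all distinct, so any nonzero submodule of $M(\cc,\d)$ must contain some basis vector $b^{n}v_{\la}$.

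Next I would split into the two cases. If $\cc\neq 0$, then iterating the formula $a\cdot b^{n}v_{\la}=n\cc\, b^{n-1}v_{\la}$ shows that from any $b^{n}v_{\la}$ in a nonzero submodule $N$ we can recover a nonzero scalar multiple of $v_{\la}$ by applying $a$ a total of $n$ times; since $v_{\la}$ generates the whole module, $N=M(\cc,\d)$, proving irreducibility. Conversely, if $\cc=0$, then $a$ annihilates every $b^{n}v_{\la}$, so the $\bC$-span of $\{b^{n}v_{\la}\mid n\geq 1\}$ is stable under $a,b,c,d$ and hence a proper nonzero submodule; the quotient is $1$-dimensional, on which $a,b,c$ act as $0$ and $d$ acts as $\d$, giving the stated $L(\d)$.

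The only mild subtlety is the commutator identity $[a,b^{n}]=nc\,b^{n-1}$, which requires the centrality of $c$, but this is essentially a bookkeeping step rather than a real obstacle; the argument is otherwise a direct PBW-plus-weight-space analysis.
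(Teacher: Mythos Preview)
Your proof is correct. Note, however, that the paper does not actually supply a proof of this lemma: it is stated with a citation to \cite{MP} and followed immediately by \qed, i.e.\ it is treated as a standard fact about Verma modules over Lie algebras with triangular decomposition. Your argument is exactly the direct elementary verification one would give in this specific case: PBW identifies $M(\cc,\d)$ with $\bC[b]v_{\la}$, the $d$-weight spaces are one-dimensional so any submodule is a sum of weight spaces, and the formula $a\cdot b^{n}v_{\la}=n\cc\,b^{n-1}v_{\la}$ then settles both directions. One small point worth making explicit is the step ``distinct $d$-eigenvalues $\Rightarrow$ any submodule contains some $b^{n}v_{\la}$'': this uses that a $d$-invariant subspace of a space on which $d$ acts diagonalizably with distinct eigenvalues is spanned by the eigenvectors it contains (a Vandermonde argument), which you invoke implicitly. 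Otherwise there is nothing to add.
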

\qed

Let $M=V(\al,\be,\ga)$ for $\al,\be,\ga\in\bC$, where
$V(\al,\be,\ga)$ is the $H_4$-module defined as follows:
 $V(\al,\be,\ga)=\bigoplus_{n\in\bZ}
\mathbb{C}v_{n}$ such that
\begin{equation}
dv_{n}=(\al+n)v_{n},\ cv_{n}=\be v_{n},\ av_{n}=-\be v_{n+1},\
bv_{n}=(\al+\ga+n)v_{n-1}\label{Im}.
\end{equation}
\begin{rem}
M. Willard \cite{Wi} showed that all irreducible weight modules of
$H_{4}$ with  finite-dimensional weight spaces can be classified
into the following classes:
\begin{itemize}
\item[(1)] Irreducible modules $L(\d)$ for $\d\in \bC$.
\item[(2)] Irreducible highest weight modules $M(\la)$ or irreducible lowest weight
modules $M^-(\mu)$.
\item[(3)] Intermediate series modules $V(\al,\be,\ga)$ defined as
(\ref{Im}) such that $\beta\neq 0$ and $\al+\gamma\notin \Z$.

\end{itemize}
\end{rem}

\subsection{Affine Nappi-Witten algebras }

To the pair $(H_{4},(\ ,\ ))$, we associate the {\it affine Nappi-Witten Lie algebra} $\widehat{H}_{4}$,
with the underlying vector space
\begin{equation}
\widehat{H}_{4}=H_{4}\ot\bC[t,t^{-1}]\oplus\bC{\bf k}
\end{equation}
equipped with the bracket relations
\begin{eqnarray}
[h_1\ot t^m,h_2\ot t^n]=[h_1,h_2]\ot
t^{m+n}+m(h_1,h_2)\delta_{m+n,0}{\bf k}, \ [\widehat{H}_{4}, {\bf
k}]=0, \label{DefinitonANW}
\end{eqnarray}
for $h_1,h_2\in H_{4}$ and $m,n\in\bZ$. It is clear that $\H$ has
the $\bZ$-grading:
$$
\H=\coprod_{n\in\bZ}\H^{(n)},
$$
where
$$
\H^{(0)}=H_{4}\oplus\bC\k,\quad\text{and}\quad\H^{(n)}=H_{4}\ot t^{n}\quad\text{for}\ n\neq0.
$$
Then one has the following graded subalgebras of $\H$:
\begin{eqnarray*}
\H^{(\pm)}=\coprod_{n>0}\H^{(\pm n)},\quad
\H^{(\geq0)}=\coprod_{n\geq0}\H^{(n)}= \H^{(+)}\oplus H_{4}\oplus\bC
\k.
\end{eqnarray*}

Let $M$ be an $H_{4}$-module and regard $M$ as an
$\H^{(\geq0)}$-module with $\H^{(+)}$ acting trivially and with $\k$
acting as the scalar $\ell$. The induced $\H$-module $V_{\H}(\ell, M)$ is
defined by
\begin{equation}
V_{\H}(\ell,M)=\Ind_{\H^{(\geq0)}}^{\H}(M)=U(\H)\ot_{U(\H^{(\geq0)})}M.\label{vermaH4}
\end{equation}

\section{Structure of (generlized) Verma modules}
\subsection{Verma modules}
Let $M=L(\d)=\bC v_{\d}$ for
$\d\in\bC$ be the one-dimensional irreducible $H_4$-module defined as in (\ref{Trivial}).
Next we shall investigate the structure of the following standard Verma module:
\begin{equation}
V_{\H}(\ell,\mathbbm{d})=U(\H)\ot_{U(\H^{(\geq0)})}\bC
v_{\d}.\label{Vac}
\end{equation}

Let us recall some basic concepts.
A partition $\la$ is a sequence $\la=(\la_1,\la_2,\dots ,\la_r)$ of
positive integers in decreasing order: $\la_1\geq\la_2\geq
\cdots\geq\la_r>0$. Denote $\P$ for the set of all partitions. We
call $r$ the length of $\la$,  denoted by $l(\la)$, and
 call the
sum of $\la_i's$  the weight of $\la$, denoted by $|\la|$.  Let
$\la=(\la_1,\la_2,\dots ,\la_r)$, $\mu=(\mu_1,\mu_2,\dots ,\mu_s)$
be two partitions. If $r<s$, we rewrite $\la$ as
$\la=(\la_1,\la_2,\dots ,\la_r, \la_{r+1}, \dots, \la_{s})$, where
$\la_{r+1}=\cdots=\la_{s}=0$. If $r>s$, we rewrite $\mu$ in a
similar way.  The natural ordering on partitions is defined as
follows:
 $$\la>\mu \iff
\la_1= \mu_{1}, \cdots, \la_i=\mu_i, \ \la_{i+1}>\mu_{i+1}, \ {\rm
for \ some} \  i\geq 0.$$
$$\la=\mu \iff \la_i=\mu_i, \ {\rm
for \ all} \  i\geq1.$$ For $k\geq 1$ and  partitions
$\la^{1},\cdots,\la^{k}, \mu^{1},\cdots, \mu^{k}$, we define
$$(\la^{1},\cdots,\la^{k})\succ (\mu^{1},\cdots,
\mu^{k})\Longleftrightarrow
\la^{1}=\mu^{1},\cdots,\la^{i}=\mu^{i},\la^{i+1}>\mu^{i+1}, \ {\rm
for \ some} \ 0\leq i\leq k.$$ For convenience, we shall denote  $x\ot t^{n}$ by $x(n)$ for $x \in
H_{4}, n\in\bZ$. For $x\in H_4$, $\la=(\la_1,\la_2,\dots ,\la_r)\in \mathcal{P}$,
denote
$$
x(\la):=x(\la_r)\cdots x(\la_1),\quad x(-\la):=x(-\la_1)\cdots x(-\la_r).
$$
Let
\begin{eqnarray*}
&&S_{2}=\{(d,a,b), (d,c, a), (d,c, b), (c,a,b)\},\\
&&S_{3}=\{(d,a),(d,b),(d,c),(a,b),(c,a),(c,b)\},\\
&&S_{4}=\{d,c,a,b\};
\end{eqnarray*}
and
\begin{eqnarray*}
&&\mathcal{B}_{1}^{\pm}=\{d(\pm\la)c(\pm\mu)a(\pm\nu)b(\pm\eta),\,|\,\la,\mu,\nu,
\eta\in \mathcal{P}\},\\
&&\mathcal{B}_{2}^{\pm}=\{x_{1}(\pm\la)x_{2}(\pm\mu)x_{3}(\pm\nu)\,|\,(x_{1},x_{2},x_{3})\in
S_{2}, \la,\mu,\nu\in \mathcal{P}\},\\
&&\mathcal{B}_{3}^{\pm}=\{x_{1}(\pm\la)x_{2}(\pm\mu)\,|\,(x_{1},x_{2})\in
S_{3}, \la,\mu\in \mathcal{P}\},\\
&&\mathcal{B}_{4}^{\pm}=\{x(\pm\la)\,|\,x\in S_{4},\, \la\in
\mathcal{P}\}.
\end{eqnarray*}
Then
$$
\mathcal{B}^{\pm}=\bigcup_{i=1}^{4}\mathcal{B}_{i}^{\pm}\label{PBW}
$$
is a PBW basis for $U(\H^{(\pm)})$. For
$X=d(-\la)c(-\mu)a(-\nu)b(-\eta)\in\B_{1}^{-}$,
$Y=d(\la)c(\mu)a(\nu)b(\eta)\in\B_{1}^{+}$, we define
$$
\Ht(X)=|\la|+|\mu|+|\nu|+|\eta|, \ \
\Ht(Y)=-(|\la|+|\mu|+|\nu|+|\eta|).
$$
Similarly,  we can define $\Ht(Z)$ for other
elements $Z$ in $\mathcal{B^{\pm}}$. The following lemma is obvious.
\begin{lem}\label{adl}  Let $X\in \B^{-}$, $Y\in \B^{+}$ be such that
$\Ht(X)<-\Ht(Y)$. Then $Y(Xv_{\d})=0$.
\end{lem}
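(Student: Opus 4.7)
The plan is to prove the lemma by a degree/grading argument on the module $V_{\H}(\ell,\mathbbm{d})$, rather than by any direct computation with commutators. Since the lemma asserts that certain compositions vanish purely for ``index bookkeeping'' reasons, a $\bZ$-grading in which $h(n)$ lowers degree by $n$ is the natural tool.

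First I would put a $\bZ$-grading on $\H$ by declaring $\deg h(n)=-n$ for $h\in H_{4}$ and $n\in\bZ$, and $\deg\k=0$. The bracket relation \eqref{DefinitonANW} preserves this grading: $[h_{1}(m),h_{2}(n)]=[h_{1},h_{2}](m+n)+m(h_{1},h_{2})\delta_{m+n,0}\k$, and both terms on the right have degree $-(m+n)$ (the Kronecker delta forces the central term to appear only in degree $0$). Hence this induces an algebra grading on $U(\H)$, and extending by $\deg v_{\d}=0$ produces a well-defined $\bZ$-grading on $V_{\H}(\ell,\mathbbm{d})$. Because $\H^{(+)}v_{\d}=0$ and $v_{\d}$ sits in degree $0$, the module is generated over $U(\H^{(-)})$ by a vector in degree $0$, and $U(\H^{(-)})$ is concentrated in non-negative degrees; consequently $V_{\H}(\ell,\mathbbm{d})$ is non-negatively graded.

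Next I would read off the degrees of $X$ and $Y$ directly from the definitions. For $X=d(-\la)c(-\mu)a(-\nu)b(-\eta)\in\B_{1}^{-}$, the operator $X$ has degree $|\la|+|\mu|+|\nu|+|\eta|=\Ht(X)\ge 0$, and similarly for elements of $\B_{2}^{-},\B_{3}^{-},\B_{4}^{-}$. For $Y\in\B^{+}$, the operator $Y$ has degree $\Ht(Y)\le 0$. Therefore $YXv_{\d}$ lies in the homogeneous component of degree $\Ht(X)+\Ht(Y)$. Under the hypothesis $\Ht(X)<-\Ht(Y)$, this component has strictly negative degree, and since $V_{\H}(\ell,\mathbbm{d})$ has no nonzero vectors of negative degree, we conclude $YXv_{\d}=0$.

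The only point requiring any care is the consistency of the grading, i.e.\ checking that $\deg$ is well-defined on $U(\H)$ in the presence of the central term; but this is immediate from the $\delta_{m+n,0}$ in the bracket, which is exactly why $\k$ is forced into degree $0$. In short, there is no real obstacle — the lemma is a direct ``degree count,'' which is why the authors call it obvious.
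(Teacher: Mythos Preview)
Your argument is correct and is precisely the reasoning the paper has in mind: the authors declare the lemma ``obvious'' and, at the start of the proof of Theorem~\ref{Theorem1}, introduce exactly this $\bN$-grading by setting $\deg(Xv_{\d})=\Ht(X)$. Your observation that $YXv_{\d}$ lands in degree $\Ht(X)+\Ht(Y)<0$ and hence must vanish is the intended one-line justification.
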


We are now in a position to give  the first main result of this
section.
\begin{theo}\label{Theorem1}
For $\ell,\mathbbm{d}\in\bC$, the $\H$-module
$V_{\H}(\ell,\mathbbm{d})$ is  irreducible  if and only if
$\ell\neq0$. Furthermore, if $\ell=0$, then the irreducible quotient of
$V_{\H}(\ell,\mathbbm{d})$ is isomorphic to the one-dimensional $\H$-module $\C v_{\d}$.
\end{theo}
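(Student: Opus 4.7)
The ``if $\ell=0$'' direction is easy. One verifies that the one-dimensional space $\bC v_{\d}$ carries an $\H$-module structure with $\k\mapsto 0$, $d(0)\mapsto \d$, and every other generator $x(n)$ acting as $0$: all defining brackets of $\H$ hold trivially since the only nontrivial central contributions are proportional to $\k=0$. By Frobenius reciprocity this yields a surjection $V_{\H}(0,\d)\twoheadrightarrow\bC v_{\d}$, so $V_{\H}(0,\d)$ is reducible, and because $V_{\H}(0,\d)/\H^{(-)}V_{\H}(0,\d)\cong \bC v_{\d}$, this is the irreducible quotient.

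For ``only if'', assume $\ell\neq 0$ and let $0\neq v\in V_{\H}(\ell,\d)$; the goal is $v_{\d}\in U(\H)v$. Using the $\bZ$-grading I may assume $v$ is homogeneous of $t$-degree $-N$ for some $N\geq 0$; since the $t$-degree-$0$ part of $V_{\H}(\ell,\d)$ is $\bC v_{\d}$, the case $N=0$ is immediate, so take $N>0$. Write $v=\sum_I c_I X_I v_{\d}$ with $X_I\in\B^-$ of height $N$, and let $X_0=d(-\la^0)c(-\mu^0)a(-\nu^0)b(-\eta^0)$ be the $\succ$-maximal term with $c_{X_0}\neq 0$. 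To this $X_0$ I associate the dual positive-mode monomial
\[
Y_0 \;=\; c(\la^0)\, d(\mu^0)\, b(\nu^0)\, a(\eta^0) \;\in\; \B^+,
\]
which has $-\Ht(Y_0)=N$ (so by Lemma \ref{adl} only height-$N$ terms $X_I$ can contribute to $Y_0 v$, and the result lies in $\bC v_{\d}$), and whose constituents are matched to the four nondegenerate central pairings $[c(n),d(-n)]=n\ell$ and $[a(n),b(-n)]=c(0)+n\ell$, together with the $d$-$c$ and $b$-$a$ analogues, each acting on $v_{\d}$ as the nonzero scalar $n\ell$ (using $c(0)v_{\d}=0$).

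It then suffices to establish two facts: (a) the diagonal pairing satisfies $Y_0 X_0 v_{\d}=\kappa_0 v_{\d}$ with $\kappa_0\neq 0$, proved by systematic Wick-style reordering of $Y_0 X_0$ — every diagonal contraction contributes a factor $k\ell$ with $k>0$, combinatorial symmetry factors from partitions with repeated parts are strictly positive, and $\ell\neq 0$ keeps the product nonzero; and (b) the triangularity $Y_0 X_I v_{\d}=0$ whenever $X_I\prec X_0$ at height $N$. Granting (a) and (b), one gets $Y_0 v=c_{X_0}\kappa_0 v_{\d}\neq 0$, hence $v_{\d}\in U(\H)v$ and $V_{\H}(\ell,\d)$ is irreducible.

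\textbf{Main obstacle.} Step (b) is the technical heart. Although $c$ is paired with $d$ and $a$ with $b$ through the central extension, the shift brackets $[d(m),a(n)]=a(m+n)$ and $[d(m),b(n)]=-b(m+n)$ create secondary $a$- and $b$-operators at shifted modes when $Y_0$ is commuted past $X_I$, so a pure Heisenberg--Wick calculus does not apply directly. I would prove (b) by induction on the lex coordinate of $\succ$ at which $X_I$ first differs from $X_0$, starting from $\la$ and proceeding rightward through $\mu,\nu,\eta$: at each stage the partition mismatch blocks the completion of any contraction sequence to a scalar on $v_{\d}$, forcing the expression to vanish. The careful bookkeeping of which secondary operators created by shift brackets can (or cannot) be absorbed by the remaining modes of $Y_0$ is where most of the effort lies.
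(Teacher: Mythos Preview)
Your triangularity claim (b) is false, and no induction on the lex coordinate will rescue it. Take $N=2$ and
\[
v \;=\; c(-2)v_{\d} \;-\; 2\,a(-1)b(-1)v_{\d}.
\]
The $\succ$-maximal term is $X_0=c(-2)$, with tuple $(\emptyset,(2),\emptyset,\emptyset)$, so your recipe gives $Y_0=d(2)$. Then $d(2)c(-2)v_{\d}=2\ell\,v_{\d}$, while
\[
d(2)a(-1)b(-1)v_{\d}=a(1)b(-1)v_{\d}-a(-1)b(1)v_{\d}=(c(0)+\ell)v_{\d}=\ell\,v_{\d},
\]
so $Y_0 v=2\ell v_{\d}-2\ell v_{\d}=0$. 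The off-diagonal pairing $Y_0 X_I v_{\d}$ with $X_I=a(-1)b(-1)\prec X_0$ is nonzero precisely because the shift bracket $[d(m),a(-n)]=a(m-n)$ feeds an $a$-mode into a slot where it can then contract with a $b$-mode to a scalar. This is not a bookkeeping issue; the pairing matrix is genuinely not upper-triangular in your ordering once $d$-modes appear in $Y_0$.

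The paper circumvents this by a reduction you are missing. First, one passes from an arbitrary nonzero $v$ to a homogeneous \emph{singular} vector $u$ in a hypothetical proper submodule. The key observation is that
\[
\mathfrak{s}\;=\;\bigoplus_{n>0}\bigl(\bC\,c(n)\oplus\bC\,d(-n)\bigr)\oplus\bC\k
\]
is a Heisenberg subalgebra acting on $V_{\H}(\ell,\d)$ with $\k=\ell\neq 0$; the resulting Fock decomposition shows that the $\mathfrak{s}$-highest-weight vectors are exactly the PBW monomials with \emph{no} $d(-n)$ factors. Since any singular $u$ is annihilated by all $c(n)$, $n>0$, it must lie in the span of such monomials. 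With the $d$-factors stripped from $u$, the problematic $d$-shifts no longer originate on the negative side, and a short case analysis (seven cases, not a single triangularity statement) produces a positive-mode monomial $Y$ with $Yu\neq 0$, contradicting singularity. Your $\ell=0$ argument is fine.
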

\begin{proof}

 For $v=Xv_{\d}\in V_{\H}(\ell,\mathbbm{d})$, $X\in
\mathcal{B}$, we denote
$$ \deg v=\Ht(X),\quad \deg v_{\d}=0.$$
Then $V_{\H}(\ell,\mathbbm{d})$ is ${\mathbb N}$-graded.

 If $\ell=0$, it is easy to see
that $\{x(-1)v_{\d}\ | \ x=a,b,c,d\}$ are  singular vectors and
generate the maximal non-trivial submodule of
$V_{\H}(\ell,\mathbbm{d})$, which implies that $V_{\H}(\ell,\d)$ is
reducible and its irreducible quotient is  $\C v_{\d}$.

Conversely, let $\ell\neq 0$. By PBW theorem and the definition of
$V_{\H}(\ell,\mathbbm{d})$, we have
$$\{v_{\d}, xv_{\d} \ | \ x\in \B^{-}\}$$ is a basis of
$V_{\H}(\ell,\mathbbm{d})$. For $n\in{\mathbb Z}_{+}$, let
$$p_{n}=c(n), \ \ q_{n}=d(-n).$$
Then
$$[p_{m},q_{n}]=m\delta_{m,n}\k.$$
Therefore $${\frak s}=\oplus_{n\in{\mathbb Z}_{+}}(\C p_{n}\oplus\C
q_{n})\oplus\C\k$$ is a Heisenberg algebra and
$V_{\H}(\ell,\mathbbm{d})$ is an ${\frak s}$-module such that  $\k$
acts as $\ell\neq 0$. Since every highest weight ${\frak s}$-module
generated by one element with $\k$ acting as a non-zero scalar is
irreducible, it follows  that $V_{\H}(\ell,\mathbbm{d})$ can be decomposed into a direct
sum of irreducible highest weight modules of ${\frak s}$ with
the highest weight vectors in
$${\mathcal N}=\{v_{\d},
c(-\la)a(-\mu)b(-\nu)v_{\d}, y_{1}(-\la)y_{2}(-\mu)v_{\d},
x(-\la)v_{\d} \
$$$$ | \ (y_{1},y_{2})\in\{(a,b),(c,a),(c,b)\}, x\in\{a,b,c\}, \la,\mu,\nu\in{\mathcal P}\}.$$
Now suppose that, for $\ell\neq0$,  $V_{\H}(\ell,\mathbbm{d})$ is not irreducible and let $U$ be
a  proper non-zero submodule of $V_{\H}(\ell,\mathbbm{d})$. Then there exists a
non-zero homogeneous singular vector $u$ in $U$ such that $u$ is a linear combination of  elements
in ${\mathcal N}$  such that
$$Xu=0,\quad\text{for all}\   X\in U(\H^{(+)}).$$
We will show that there exists an
element $Y$ in $\B^{+}$ such that $Yu\neq 0$, giving a
contradiction. Hence we can obtain $V_{\H}(\ell,\mathbbm{d})$ is  irreducible. We may
assume that
$$u=\sum_{i=1}^{3}X_{i}v_{\d},
$$where
$\Ht(X_{1})=\Ht(X_{2})=\Ht(X_{3})$ and
\begin{eqnarray*}
&&X_{1}=\sum_{j=1}^{l_{1}}a_{1j}c(-\la^{(1j)})a(-\mu^{(1j)})b(-\nu^{(1j)}),\\
\end{eqnarray*}
where $
(\la^{(1j)},\mu^{(1j)},\nu^{(1j)})\succ(\la^{(1,j+1)},\mu^{(1,j+1)},\nu^{(1,j+1)})$
for $ j=1,\cdots, l_{1}-1$,
\begin{eqnarray*}
X_{2}&=&X_{21}+X_{22}+X_{23},
\end{eqnarray*}
where
\begin{eqnarray*}
&&X_{21}=\sum_{j=1}^{l_{2}}a_{2j}c(-\la^{(2j)})a(-\mu^{(2j)}),\quad X_{22}=\sum_{j=1}^{l_{3}}a_{3j}c(-\la^{(3j)})b(-\nu^{(3j)}),\\
&&X_{23}=\sum_{j=1}^{l_{4}}a_{4j}a(-\mu^{(4j)})b(-\nu^{(4j)}),
\end{eqnarray*}
for $(\la^{(2j)},\mu^{(2j)})\succ(\la^{(2,j+1)},\mu^{(2,j+1)})$
$j=1,\cdots, l_{2}-1; (\la^{(3j)},\nu^{(3j)})\succ(\la^{(3,j+1)},\mu^{(3,j+1)})$ $j=1,\cdots, l_{3}-1$; and
$(\mu^{(4j)},\nu^{(4j)})\succ(\mu^{(4,j+1)},\nu^{(4,j+1)})$ $j=1,\cdots, l_{4}-1,$
and
\begin{eqnarray*}
X_{3}&=&X_{31}+X_{32}+X_{33},
\end{eqnarray*}
where
$$
X_{31}=\sum_{j=1}^{l_{5}}a_{5j}c(-\la^{(5j)}),\ X_{32}=\sum_{j=1}^{l_{6}}a_{6j}a(-\mu^{(6j)}),\
X_{33}=\sum_{j=1}^{l_{7}}a_{7j}b(-\nu^{(7j)}),
$$
for $\la^{(5j)}\succ\la^{(5,j+1)}$ $ j=1,\cdots,l_{5}-1$;
$ \mu^{(6j)}\succ\mu^{(6,j+1)}$ $j=1,\cdots,l_{6}-1;$ and
$ \nu^{(7j)}\succ\nu^{(7,j+1)}$ $ j=1,\cdots,l_{7}-1$.

We break up the proof into seven different cases.
\begin{itemize}
\item[(1)]$X_{32}\neq 0$.  Let
$Y=b(\mu^{(61)})$, then
$$Yu=a_{61}l^{r_{61}}\prod_{j=1}^{r_{61}}\mu_{j}^{(61)}v_{\d}\neq 0,$$
where $\mu^{(61)}=(\mu_{1}^{(61)}, \cdots, \mu_{r_{61}}^{(61)})$.
\item[(2)] $X_{32}=0$, $X_{33}\neq 0$. Taking
$Y=a(\nu^{(71)})$,  we have
$$YX_{1}v_{\d}=YX_{2}v_{\d}=0,\quad YX_{3}v_{\d}\neq 0.$$
Hence $Yu\neq 0$.
\item[(3)] $X_{32}=X_{33}=0$, $X_{23}\neq 0$.
Let $Y=a(\nu^{(41)})b(\mu^{(41)})$, then
$$
YX_{1}v_{\d}=YX_{3}v_{\d}=0,\quad YX_{2}v_{\d}\neq 0.
$$
which implies that $Yu\neq 0$.
\item[(4)]$X_{23}=X_{32}=X_{33}=0$, $X_{31}\neq 0$. Let
$Y=d(\la^{(51)})$, then we have
$$
YX_{1}v_{\d}=YX_{2}v_{\d}=0,\quad YX_{3}v_{\d}\neq 0.
$$
Hence $Yu\neq 0$.
\item[(5)] $X_{3}=X_{23}=0$, $X_{1}\neq 0$. Let
$Y=a(\nu^{(11)})b(\mu^{(11)})d(\la^{(11)})$, then
$$
YX_{2}v_{\d}=0,\quad YX_{1}v_{\d}\neq 0.$$
Hence $Yu\neq 0$.
\item[(6)]$X_{1}=X_{3}=0$, $X_{23}=0$, $X_{21}\neq 0$.
Let $Y=b(\mu^{(21)})d(\la^{(21)})$, then $Yu\neq 0$.
\item[(7)]$X_{1}=X_{3}=0$, $X_{21}=X_{23}=0$, $X_{22}\neq 0$.
Let $Y=a(\nu^{(31)})d(\la^{(31)})$, then $Yu\neq 0$.
\end{itemize}
 We complete the
proof of the theorem.

\end{proof}

\subsection{Generalized Verma modules}
Next we
shall consider the  generalized Verma modules
\begin{eqnarray*}
V_{\H}(\ell,\mathbbm{c},\d)=V_{\H}(\ell,M(\la))=\Ind_{\H^{(\geq
0)}}^{\H}M(\la)
\end{eqnarray*}
as defined in (\ref{vermaH4}), where  $M(\la)$  is the $H_4$-module
defined  in (\ref{Verma}) such that $\la(c)=\mathbbm{c}\in\bC^*$ and
$\la(d)=\d\in\bC$. It is clear that $\H$  has the following new triangular
 decomposition:
\begin{eqnarray}
\H= \H^{-}\oplus  \H^{0}\oplus  \H^{+},\label{NT}
\end{eqnarray}
where
$$
\H^{+}=\mathbb{C}a\oplus \H^{(>0)},\quad \H^{-}=\mathbb{C}b\oplus
\H^{(<0)},\quad \H^0=\mathbb{C}c\oplus\mathbb{C}d\oplus \mathbb{C}\k.
$$
According to  this new triangular decomposition,  for $\d,\ell\in\bC$ and $\cc\in\bC^*$, we have the standard Verma module
\begin{eqnarray}
V^{new}_{\H}(\la)=U(\H)\otimes_{U(\widehat{H}_{4}^{+}+\H^0)}w_{\la}.
\end{eqnarray}
where $\la\in(\H^0)^*$, $\H^+w_{\la}=0$ and $xw_{\la}=\la(x)w_{\la}$ for all $x\in\H^0$.
Furthermore, if
$$
\la(c)=\cc ,\quad \la(d)=\d ,\quad \la(\k)=\ell ,
$$
then the generalized Verma module $V_{\H}(\ell,\cc,\d)$ is
isomorphic to the standard Verma module $V^{new}_{\H}(\la)$ as
$\H$-modules.

The second main result of this section is stated as follows:

\begin{theo}For $\ell,\d\in\bC$ and $\cc\in\bC^*$,  the
$\H$-module $V_{\H}(\ell,\mathbbm{c},\d)$ is irreducible if and only
if $\ell\neq 0$ and $\cc\notin \ell\Z$. Furthermore,  we have

\begin{itemize}
\item[(i)] If $\ell\neq 0$ and $\cc+m\ell=0$, for some $m\in\Z_{+}$, then all the
linearly independent singular vectors are
$$
u=[\sum_{\la\in
P(m)}(a_{\la}c(-\la)b+\sum_{i=1}^{k_{\la}}b_{\la\setminus\la_{i},\la_{i}}c(-\la\setminus\la_{i})b(-\la_{i}))]^{k}v_{\d},
\ $$ for all $k\in\Z_{+}$,  satisfying
\begin{eqnarray*}
&&a_{\la}q_{i}\la_{i}\ell-b_{\la\setminus\la_{i},\la_{i}}=0, \ i=1,2,\cdots,k_{\la},\\
&&a_{\la}\cc+\sum_{i=1}^{k_{\la}}b_{\la\setminus\la_{i},\la_{i}}=0,\\
&&b_{\la\setminus\la_{i},\la_{i}}(\cc+\la_{i}\ell)
+\sum_{j=1}^{k_{\la}}b_{{\la}\setminus\la_{i}\setminus\la_{j},\la_{i}+\la_{j}}=0;
\end{eqnarray*}
\item[(ii)]If $\ell\neq 0$ and $\cc-m\ell=0$, for some $m\in\Z_{+}$, then all the
linearly independent singular vectors are
$$
u=[\sum_{\la\in
P(m)}\sum_{i=1}^{k_{\la}}c_{\la\setminus\la_{i},\la_{i}}c(-\la\setminus\la_{i})a(-\la_{i}))]^{k}v_{\d},$$
for all $k\in\Z_{+}$,  satisfying
$$
q_{i}\la_{i}\ell c_{\la\setminus\la_{j},\la_{j}}+c_{\la\setminus\la_{i}\setminus\la_{j},\la_{i}+\la_{j}}=0,
\ i,j=1,2,\cdots,k_{\la}, i\neq j,
$$$$\label{dd}c_{\la\setminus\la_{i},\la_{i}}(-\cc+\la_{i}\ell)-\sum_{j=1}^{k_{\la}}c_{\la\setminus\la_{i}\setminus\la_{j},\la_{i}+\la_{j}}=0,
\ i=1,2,\cdots,k_{\la},
$$
where in (i) and (ii), $P(m)$ is the set of all partitions of weight
$m$,
$$
\la^{(k)}=(\la,\la,\cdots,\la)\in \Z_{+}^{k},\quad
\la=(\la_{1}^{(q_{1})}, \la_{2}^{(q_{2})},\cdots,
\la_{k_{\la}}^{(q_{k_{\la}})})$$ such that
$\sum_{i=1}^{k_{\la}}q_{i}\la_{i}=m$, and
$\la\setminus\la_{i}=(\la_{1}^{(q_{1})},\cdots,
\la_{i}^{(q_{i}-1)},\cdots, \la_{k_{\la}}^{(q_{k_{\la}})})$ and if
$\la=\la_{i}$, then
$c(-\la\setminus\la_{i})b(-\la_{i})=b(-\la_{i})$,
$c(-\la\setminus\la_{i})a(-\la_{i})=a(-\la_{i})$;
\item[(iii)] If $\ell=0$, then all the linearly independent singular vectors are
$$u=c(-\la)v_{\d}, $$
for all partitions $\la\in{\mathcal P}$.
\end{itemize}

\end{theo}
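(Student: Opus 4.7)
The plan is to adapt the strategy of Theorem \ref{Theorem1}, with two new ingredients needed to accommodate the present situation: (a) the $H_4$-module $M(\la)$ is infinite-dimensional when $\cc\neq 0$, so that $\{b(0)^kv_\d\}_{k\ge0}$ is an infinite linearly independent family inside $V_{\H}(\ell,\cc,\d)$; and (b) in the resonance cases $\cc = \pm m\ell$, singular vectors occur as arbitrary $k$-th powers of a fundamental one. Assuming first $\ell \neq 0$, I would invoke the Heisenberg reduction of Theorem \ref{Theorem1}: the operators $p_n = c(n)$ and $q_n = d(-n)$ ($n\in\Z_+$) together with $\k$ span a Heisenberg subalgebra $\mathfrak{s}\subset\H$, and $V_{\H}(\ell,\cc,\d)$ decomposes as a direct sum of irreducible $\mathfrak{s}$-Fock modules. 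Since any $\H$-singular vector $u$ is annihilated by all $c(n)$, $n\ge1$, it must lie in the subspace $\mathcal N$ spanned by the PBW monomials
$$c(-\la)a(-\mu)b(-\nu)b(0)^k v_\d,\qquad \la,\mu,\nu\in\P,\ k\in\N,$$
i.e.\ those with no $d(-m)$-factor ($m\ge1$). One checks that $\mathcal N$ is stable under the remaining singularity operators $a(0), a(n), b(n), d(n)$ ($n\ge1$), since none of their commutators with $c(-\eta), a(-\mu), b(-\nu), b(0)$ produce $d$-modes.

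The next step is to expand a general $u\in\mathcal N$ in this PBW basis and translate the conditions $a(0)u = a(n)u = b(n)u = d(n)u = 0$ into a recursive linear system on the expansion coefficients, using the brackets
$$[a(n),b(-m)] = c(n-m)+n\delta_{n,m}\ell,\quad [d(n),c(-m)] = n\delta_{n,m}\ell,$$
$$[d(n),a(-m)] = a(n-m),\quad [d(n),b(-m)] = -b(n-m),$$
together with $c(0)v_\d = \cc v_\d$ and $a(0)b(0)^kv_\d = k\cc\, b(0)^{k-1}v_\d$. The only scalars in this system that can vanish are $\cc+n\ell$ (from $[a(n),b(-n)]v_\d$) and $-\cc+n\ell$ (from $[b(n),a(-n)]v_\d$). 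Consequently, when $\cc\notin\ell\Z$, induction on weight forces $u\propto v_\d$, proving irreducibility; when $\cc+m\ell=0$ for some $m\in\Z_+$, substituting the ansatz built from $c(-\la)b$ and $c(-\la\setminus\la_i)b(-\la_i)$ at weight $m$ and matching PBW coefficients will produce exactly the relations displayed in (i) as necessary and sufficient for the fundamental singular vector $u_1 = Xv_\d$ to be singular, and the case $\cc-m\ell=0$ is entirely symmetric under $a\leftrightarrow b$, giving (ii).

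To pass from $u_1$ to the family $X^kv_\d$, I would verify that the coefficient relations of (i) (resp.\ (ii)) imply $[y, X]X^{k-1}v_\d = 0$ for every $y\in\H^+$ and every $k\ge1$, which by induction via $yX^kv_\d = [y,X]X^{k-1}v_\d + X\,yX^{k-1}v_\d$ yields singularity of $X^kv_\d$; a complementary weight-by-weight uniqueness argument (repeating the PBW analysis above at weight $km$) would then ensure these exhaust all singular vectors. Finally, when $\ell=0$, the computation $[c(m),y(n)] = [c,y]\otimes t^{m+n} + m(c,y)\delta_{m+n,0}\ell = 0$ makes $c(n)$ central in $U(\H)$ for every $n\in\Z$, so each $c(-\la)v_\d$ is automatically singular; centrality also lets any singular vector be written uniquely as $\sum_\la c(-\la)u_\la$ with $u_\la$ free of $c(-n)$, $n\ge1$, each $u_\la$ being separately singular, and the same PBW argument (with $c$-modes suppressed) forces $u_\la\propto v_\d$. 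The main obstacle throughout will be combinatorial: because $b(0)v_\d\neq 0$, singular vectors carry components at every $b(0)$-weight coupled by $a(0)$, and extracting the commutation identity $[y,X]X^{k-1}v_\d = 0$ from the explicit coefficient relations of (i), (ii) is the most delicate step of the entire argument.
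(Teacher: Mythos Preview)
Your plan follows essentially the same strategy as the paper: the Heisenberg reduction to eliminate $d(-n)$-factors, followed by a PBW analysis of the remaining space $\mathcal N$. Two remarks are worth making.

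First, you have the difficulty located in the wrong place. The inductive step you flag as ``most delicate'' --- showing $[y,X]X^{k-1}v_\d = 0$ --- is in fact clean: since $X$ is built from the mutually commuting generators $c(-m)$, $b(-m)$, $b(0)$, and since for every $y\in\H^+$ the commutator $[y,X]$ lands in this same commutative algebra modulo positive modes of $b$ and $c$ (which commute past $X$ and kill $v_\d$), the base case $[y,X]v_\d=0$ already forces the relevant part of $[y,X]$ to vanish identically, hence to annihilate $X^{k-1}v_\d$ for every $k$. By contrast, the step you gloss over --- why an \emph{arbitrary} singular vector must reduce to the displayed ansatz (no $a(-m)$-factors, a single $b(-m)$-factor in each monomial of $X$) --- is where the paper spends most of its effort: it runs a lengthy case elimination, applying carefully chosen operators $b(\mu_1)$, $a(\nu_1)$, $d(\lambda)$ to successively force $u_1=u_{21}=u_{23}=0$ and then $l(\nu^{(3j)})=1$. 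Your ``recursive linear system'' sketch would need to reproduce this elimination, and it is not automatic.

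Second, a minor caution on case (iii): centrality of the $c(n)$ when $\ell=0$ does \emph{not} immediately give that each $u_\lambda$ is separately singular, because $a(n)$ and $b(n)$ acting on a $c$-free $u_\lambda$ produce $c$-modes via $[a(n),b(-m)]=c(n-m)$, so the $a$- and $b$-singularity conditions couple different $\lambda$'s. Only the $d(n)$- and $c(n)$-conditions decouple cleanly; you will need a short additional argument to finish.
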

\begin{proof} If  $\ell=0$, then it is easy to see that $\{c(-\la)v_{\d} \ | \ \la\in{\mathcal P}\}$
 are all the linearly independent singular vectors.

Now assume that $\ell\neq 0$.

Let ${\frak s}$ be the infinite-dimensional Heisenberg algebra
defined in the proof of Theorem \ref{Theorem1}. Then as an ${\frak s}$-module,
$V_{\H}(\ell,\cc,\d)$ is a direct sum of irreducible highest weight
${\frak s}$-modules with $\k$ acting as $\ell$ and highest weight
vectors in
$$
{\mathcal N}_{\la}=\{b^{i}v_{\d}, c(-\la)a(-\mu)b(-\nu)b^{i}v_{\d},
y_{1}(-\la)y_{2}(-\mu)b^{i}v_{\d},$$$$ x(-\la)b^{i}v_{\d} \ | \
i\geq 0, (y_{1},y_{2})\in\{(a,b),(c,a),(c,b)\}, x\in\{a,b,c\},
\la,\mu,\nu\in{\mathcal P}\}.$$ Let
$$\widehat{H}_4^{+}=\widehat{H}_4^{(+)}+H_{4}^{+}, $$
where $\widehat{H}_4^{(+)}$ and $H_{4}^{+}$ are defined as in Section
2.3. For $n\in\Z$, define
\begin{eqnarray*}
&&\deg a(-n)=n-1,\ \deg b(-n)=n+1,\deg c(-n)=n,\  \deg d(-n)=n,\quad\deg\k=0.
\end{eqnarray*}
Then $U(\widehat{H}_4)$ is $\Z$-graded. So we get an $\N$-gradation
of $V^{new}_{\H}(\ell,\cc,\d)$ by defining $\deg v_{\d}=0$.

Suppose that $u\in V_{\H}(\ell,\cc,\d)$ is a singular vector such
that
$$U(\widehat{H}_4^{+})u=0.$$ Since $V_{\H}(\ell,\cc,\d)$ is a direct sum of irreducible highest
weight ${\frak s}$-modules  and $M(\la)$ is irreducible, we may
assume that $u$ is homogeneous and
$$
u=\sum_{i=1}^{3}u_{i},
$$
where
$$u_{1}=\sum_{j=1}^{l_{1}}a_{1j}c(-\la^{(1j)})a(-\mu^{(1j)})b(-\nu^{(1j)})b^{k_{1j}}v_{\d},$$
for $(\la^{(1j)},\mu^{(1j)},\nu^{(1j)})\succ(\la^{(1,j+1)},\mu^{(1,j+1)},\nu^{(1,j+1)})
, j=1,\cdots, l_{1}-1$,
$$
u_{2}=u_{21}+u_{22}+u_{23},$$
where
$$u_{21}=\sum_{j=1}^{l_{2}}a_{2j}c(-\la^{(2j)})a(-\mu^{(2j)})b^{k_{2j}}v_{\d},
\ u_{22}=\sum_{j=1}^{l_{3}}a_{3j}c(-\la^{(3j)})b(-\nu^{(3j)})b^{k_{3j}}v_{\d},
$$
$$
u_{23}=\sum_{j=1}^{l_{4}}a_{4j}a(-\mu^{(4j)})b(-\nu^{(4j)})b^{k_{4j}}v_{\d},
$$
for $(\la^{(2j)},\mu^{(2j)})\succ(\la^{(2,j+1)},\mu^{(2,j+1)}),
j=1,\cdots, l_{2}-1$;
$(\la^{(3j)},\nu^{(3j)})\succ(\la^{(3,j+1)},\mu^{(3,j+1)}),
j=1,\cdots, l_{3}-1$;
$(\mu^{(4j)},\nu^{(4j)})\succ(\mu^{(4,j+1)},\nu^{(4,j+1)}),
j=1,\cdots, l_{4}-1$, and
$$
u_{3}=u_{31}+u_{32}+u_{33},$$
where
$$
u_{31}=\sum_{j=1}^{l_{5}}a_{5j}c(-\la^{(5j)})b^{k_{5j}}v_{\d}, $$$$
u_{32}=\sum_{j=1}^{l_{6}}a_{6j}a(-\mu^{(6j)})b^{k_{6j}}v_{\d},\
u_{33}=\sum_{j=1}^{l_{7}}a_{7j}b(-\nu^{(7j)})b^{k_{7j}}v_{\d},$$
for $\la^{(5j)}\succ\la^{(5,j+1)},
j=1,\cdots,l_{5}-1$; $\mu^{(6j)}\succ\mu^{(6,j+1)},
j=1,\cdots,l_{6}-1$; $\nu^{(7j)}\succ\nu^{(7,j+1)},
j=1,\cdots,l_{7}-1$.

{\bf Case 1} \  $\ell\neq 0$ and $\cc\notin \ell\Z$. One can prove that
$u=0$ by the same method used in the proof of Theorem
\ref{Theorem1}. Therefore $V^{new}_{\H}(\ell,\cc,\d)$ is
irreducible.

{\bf Case 2} \ $\ell\neq 0$ and  $\cc\in \ell\Z$.

Since $au=0$, it is easy to see that $k_{ij}=0$, for $i=4,6,7$. If
$u_{33}\neq 0$, we consider $a(\nu_{1}^{(71)})u$. Since no monomial
of $a(\nu_{1}^{(71)})u_{1}$, $a(\nu_{1}^{(71)})u_{2}$,
$a(\nu_{1}^{(71)})u_{31}$ and $a(\nu_{1}^{(71)})u_{32}$ is of the
form $b(-\eta)$, where $\eta\in{\mathcal P}$, we deduce that
$$
\cc+\nu_{1}^{(71)}\ell=0.$$ Similarly, considering
$a(\nu_{r_{7p}}^{(7p)})u$, where
$\nu_{r_{7p}}^{(7p)}=min\{\nu_{r_{7j}}^{(7j)}, \
j=1,2,\cdots,l_{7}\}$, we have
$$
\cc+\nu_{r_{7p}}^{(7p)}\ell=0.$$ Then we deduce that $l_{7}=1$ and
$u_{33}=a_{31}(b(-\nu_{1}^{(71)}))^{k_{3}}v_{\d}$ such that
$$\cc+\nu_{1}^{(71)}\ell=0, \ \ k_{3}(\nu_{1}^{(71)}+1)=\deg u.$$
If $u_{32}\neq 0$, then by the fact that $b(\mu_{1}^{(61)})u=0$, we
have
$$
\cc-\mu_{1}^{(61)}\ell=0.$$ Therefore
$$
(\mu_{1}^{(61)}+\nu_{1}^{(71)})\ell=0,$$ which is impossible since
$\mu_{1}^{(61)}+\nu_{1}^{(71)}>0$ and $\ell\neq 0$. We deduce that
$u_{32}=0$ or $u_{33}=0$.

{\bf Subcase 1} \ $u_{33}\neq 0$, $u_{32}=0$.

By the fact that
$b(\mu_{1}^{(41)})u=b(\mu_{1}^{(41)})u_{31}=b(\mu_{1}^{(41)})u_{33}=b(\mu_{1}^{(41)})u_{22}=0$,
and both  $b(\mu_{1}^{(41)})u_{1}$ and $b(\mu_{1}^{(41)})u_{21}$
contain no monomials of the forms $a(-\la)b(-\mu)$ and $b(-\mu)$,
where $\la,\mu\in{\mathcal P}$, we have
$$
b(\mu_{1}^{(41)})u_{23}=\sum_{j=1}^{l_{4}}a_{4j}\sum_{p=1}^{r_{4j}}
\delta_{\mu_{1}^{(41)},\mu_{p}^{(4j)}}(-\cc+\mu_{1}^{(41)}\ell)$$
$$\cdot a(-\mu^{(4j)})\widehat{a(-\mu_{p}^{(4j)})}b(-\nu^{(4j)})v_{d}=0,$$
where $\widehat{a(-\mu_{p}^{(4j)})}$ means this factor is deleted.
 So if
$u_{23}\neq 0$, then $-\cc +\mu_{1}^{(41)}\ell=0$, which is impossible
since $\cc+\nu_{1}^{(71)}\ell=0.$ This proves that $u_{23}=0.$ Let
$Y=b(\mu_{1}^{(21)})$, then $Yu=Yu_{1}+Yu_{21}=0$. If $u_{21}\neq
0$, comparing $Yu_{1}$ and $Yu_{21}$, we have
$$
-\cc +\mu_{1}^{(21)}\ell=0,$$ which is not true. So $u_{21}=0$.
Similarly, $u_{1}=0.$ Therefore
$$u=u_{22}+u_{31}+u_{33}.$$
 By the fact that
$a(\nu_{1}^{(31)})u=0$, we can easily deduce that $\nu^{(3j)}\preceq
\nu^{(71)}$, $j=1,\cdots,l_{3}$. Similarly, we have
$\la^{(3j)}\preceq \nu^{(71)},\la^{(5k)}\preceq \nu^{(71)}$,
$j=1,\cdots,l_{3}, k=1,\cdots, l_{5}$. If $\nu_{1}^{(71)}=1$, then
$\cc+\ell=0$, $u_{33}=a_{71}b(-1)^{k_{3}}v_{\d}$ and
$$u_{22}=\sum_{j=1}^{l_{3}}a_{3j}c(-1)^{p_{3j}}b(-1)^{q_{3j}}b^{k_{3j}}v_{\d},\
u_{31}=\sum_{j=1}^{l_{5}}a_{5j}c(-1)^{p_{5j}}b^{k_{5j}}v_{\d}.$$
Since $d(1)u=0$, we have
$$
u_{22}+u_{31}+u_{33}=k(\sum_{j=0}^{k_{3}}\ell^{-k_{3}+j}C_{k_{3}}^{j}c(-1)^{k_{3}-j}b(-1)^{j}b^{k_{3}-j}v_{\d}),$$
for some $k\in \C$. Let $$
u=\sum_{j=0}^{k_{3}}\ell^{-k_{3}+j}C_{k_{3}}^{j}c(-1)^{k_{3}-j}b(-1)^{j}b^{k_{3}-j}v_{\d}=(\ell^{-1}c(-1)b+b(-1))^{k_{3}}v_{\d}.$$
Then we have
$$
au=\sum_{j=0}^{k_{3}}\ell^{-k_{3}+j}C_{k_{3}}^{j}(jc(-1)^{k_{3}-j+1}b(-1)^{j-1}b^{k_{3}-j}+(k_{3}-j)\cc
c(-1)^{k_{3}-j} b(-1)^{j}b^{k_{3}-j-1})v_{\d}=0.$$ It is clear that
$a(n)u=b(n)u=c(n)u=d(m)u=0$, for $n\geq 1, m\geq 2$. We prove that
$u$ is a non-zero singular vector for each $k_{3}\in \N$.

Assume that $\nu_{1}^{(71)}>1$. If $\nu_{1}^{(3j)}=1$ for all
$j=1,2,\cdots,l_{3}$. Then $\nu_{1}^{(71)}=2$. In fact, if
$\nu_{1}^{(71)}>2$, then
$a(\nu_{1}^{(71)}-1)u_{22}=a(\nu_{1}^{(71)}-1)u_{31}=0$,
$a(\nu_{1}^{(71)}-1)u_{33}\neq 0$, a contradiction. If $k_{3}>1$,
then $a(1)u_{31}=0$, and $a(1)u_{33}$ contains factor $b(-2)$, but
no monomial of $a(1)u_{22}$ contains factor $b(-2)$. So
$u_{33}=u_{22}=0.$ Then $u_{31}=0$ and we deduce that $u=0$. If
$k_{3}=1$, then $u_{33}=a_{71}b(-2)v_{\d}$ and one can easily deduce
that
$$
u=(\ell c(-2)b+c(-1)^{2}b+2\ell c(-1)b(-1)+2\ell^{2}b(-2))v_{\d}$$ is a
singular vector. Generally, for $\nu_{1}^{(71)}=2$, we have
$$u=(\ell c(-2)b+c(-1)^{2}b-\cc c(-1)b(-1)-\ell\cc b(-2))^{k}v_{\d}, \ \ k\geq
1.$$

Now assume that $\nu_{1}^{(71)}=m>2$ and $l(\nu^{(71)})=1$. Let
$1\leq p\leq l_{3}$ be  such that $l(\nu^{(3p)})=max\{l(\nu^{(3j)}),
j=1,2,\cdots,l_{3}\} $ and if $l(\nu^{(3q)})=l(\nu^{(3p)})$, then
$\nu^{(3p)}\succeq \nu^{(3q)}$. If $l(\nu^{(3p)})>1$, then
$d(\la^{(3p)})u\neq 0$, a contradiction. So $l(\nu^{(3j)})=1$ for
all $j=1,2,\cdots,l_{3}$. It follows that $k_{3j}=0,
j=1,\cdots,l_{3}$ since $au=0$.  For $x\in\Z_{+}, k\in\Z_{+}$,
denote $(x,\cdots,x)\in\N^{k}$ by $x^{(k)}$. Then
$$u=\sum_{\la\in
P(m)}[a_{\la}c(-\la)b+\sum_{i=1}^{k_{\la}}b_{\la\setminus\la_{i},\la_{i}}c(-\la\setminus\la_{i})b(-\la_{i})]v_{\d},$$
where $P(m)$ is the set of all partitions of weight $m$,
$\la=(\la_{1}^{(q_{1})}, \la_{2}^{(q_{2})},\cdots,
\la_{k_{\la}}^{(q_{k_{\la}})})$ such that
$\sum_{i=1}^{k_{\la}}q_{i}\la_{i}=m$, and
$\la\setminus\la_{i}=(\la_{1}^{(q_{1})},\cdots,
\la_{i}^{(q_{i}-1)},\cdots, \la_{k_{\la}}^{(q_{k_{\la}})})$. By the
fact that $d(n)u=a(m)u=0$, for $m\in\N$, $n\in\Z_{+}$, and
$\cc+m\ell=0$, we deduce  that $a_{\la}$,
$b_{\la\setminus\la_{i},\la_{i}}, i=1,2,\cdots,k_{\la}$ are uniquely
determined by the following equations up to a non-zero scalar.
\begin{equation}
\label{aa} a_{\la}q_{i}\la_{i}\ell-b_{\la\setminus\la_{i},\la_{i}}=0, \
i=1,2,\cdots,k_{\la}
\end{equation}
\begin{equation}\label{bb}a_{\la}\cc+\sum_{i=1}^{k_{\la}}b_{\la\setminus\la_{i},\la_{i}}=0,
\ b_{\la\setminus\la_{i},\la_{i}}(\cc+\la_{i}\ell)
+\sum_{j=1}^{k_{\la}}b_{{\la}\setminus\la_{i}\setminus\la_{j},\la_{i}+\la_{j}}=0.
\end{equation}
It is easy to check that the $u$ determined by (\ref{aa}) and
(\ref{bb}) is indeed a non-zero singular vector. Generally
$$u=[\sum_{\la\in
P(m)}(a_{\la}c(-\la)b+\sum_{i=1}^{k_{\la}}b_{\la\setminus\la_{i},\la_{i}}c(-\la\setminus\la_{i})b(-\la_{i}))]^{k}v_{\d},
\ k=1,2,\cdots$$ satisfying (\ref{aa}) and (\ref{bb}) are all the
linearly independent singular vectors.

{\bf Subcase 2} $u_{33}=0$, $u_{32}\neq 0$. Then
$-\cc+\mu_{1}^{(61)}\ell=0$.  Similar to the proof for Subcase 1, we
can deduce that $u_{1}=u_{22}=u_{23}=0$, $l_{6}=1$,
$\mu^{(61)}=(\mu_{1}^{(61)},\cdots,\mu_{1}^{(61)})$ and
 $k_{2i}=k_{5j}=0,i=1,2,\cdots,l_{2},
j=1,2,\cdots,l_{5}$.

We first assume that $l(\mu^{(61)})=1$ and $\mu_{1}^{(61)}=m$. Then
it is easy to see that $l(\mu^{(2j)})=1$, $j=1,2,\cdots,l_{2}$,
$u_{21}=0$. Therefore
$$
u=\sum_{\la\in
P(m)}\sum_{i=1}^{k_{\la}}c_{\la\setminus\la_{i},\la_{i}}c(-\la\setminus\la_{i})a(-\la_{i}))v_{\d},
$$
where $P(m)$ is the set of all partitions of weight $m$,
$\la^{(k)}=(\la,\la,\cdots,\la)\in \Z_{+}^{k}$,
$\la=(\la_{1}^{(q_{1})}, $ $\la_{2}^{(q_{2})},\cdots,
\la_{k_{\la}}^{(q_{k_{\la}})})$ such that
$\sum_{i=1}^{k_{\la}}q_{i}\la_{i}=m$, and
$\la\setminus\la_{i}=(\la_{1}^{(q_{1})},\cdots,
\la_{i}^{(q_{i}-1)},\cdots, \la_{k_{\la}}^{(q_{k_{\la}})})$. By
$d(n)u=b(n)u=0$, for $n\in\Z_{+}$, we deduce that
\begin{equation}
\label{cc}
q_{i}\la_{i}\ell c_{\la\setminus\la_{j},\la_{j}}+c_{\la\setminus\la_{i}\setminus\la_{j},\la_{i}+\la_{j}}=0,
\ i,j=1,2,\cdots,k_{\la}, i\neq j,
\end{equation}
\begin{equation}\label{ee}c_{\la\setminus\la_{i},\la_{i}}(-\cc+\la_{i}\ell)-\sum_{j=1}^{k_{\la}}c_{\la\setminus\la_{i}\setminus\la_{j},\la_{i}+\la_{j}}=0,
\ i=1,2,\cdots,k_{\la}.
\end{equation}
Actually, $u$ is uniquely determined by (\ref{cc}) and (\ref{ee}).
Then one can easily deduce that all the linearly independent
singular vectors are
$$
u=[\sum_{\la\in
P(m)}\sum_{i=1}^{k_{\la}}c_{\la,\la_{i}}c(-\la\setminus\la_{i})a(-\la_{i}))]^{k}v_{\d},\
k\in\Z_{+}$$ satisfying (\ref{cc}) and (\ref{ee}).
\end{proof}

\subsection{}
In this subsection, we shall study the irreducibility of $\H$-module
$$
V_{\H}(\ell,\al,\be,\ga)=\Ind_{H_{4}^{(\geq
0)}}^{\H}V(\al,\be,\ga)$$ with $\k$ acting as a scalar $\ell$. It is
obvious that $V(\al,\be,\ga)$ is irreducible if and only if $\beta\neq 0$ and
$\al+\ga\notin \bZ$. Similarly, we have
\begin{theo}Let $\al,\be,\ga\in\bC$ be such that $\al+\ga\notin \bZ$, $\beta\neq 0$. Then  the $\H$-module
$V_{\H}(\ell,\al,\be,\ga)$ is irreducible if and only if
$\beta+n\ell\neq 0$ for all $n\in \mathbb{Z}$. Furthermore, we have
\begin{itemize}
\item[(i)]If $\ell\neq 0$ and $\be+ml=0$, for some $m\in\Z_{+}$, then for each
$k\in\Z_{+}$,
$$
u=[\sum_{\la\in
P(m)}(a_{\la}c(-\la)b+\sum_{i=1}^{k_{\la}}b_{\la\setminus\la_{i},\la_{i}}c(-\la\setminus\la_{i})b(-\la_{i}))]^{k}v_{0},
\ $$  satisfying
$$a_{\la}q_{i}\la_{i}\ell-b_{\la\setminus\la_{i},\la_{i}}=0, \ i=1,2,\cdots,k_{\la},
$$$$
a_{\la}\cc+\sum_{i=1}^{k_{\la}}b_{\la\setminus\la_{i},\la_{i}}=0, \
b_{\la\setminus\la_{i},\la_{i}}(\cc+\la_{i}l)+\sum_{j=1}^{k_{\la}}b_{{\la}\setminus\la_{i}\setminus\la_{j},\la_{i}+\la_{j}}=0$$
generates a non-trivial submodule of $V_{\H}(\ell,\al,\be,\ga)$;

\item[(ii)] If $\ell\neq 0$ and $\be-m\ell=0$, for some $m\in\Z_{+}$, then for
each $k\in\Z_{+}$,
$$
u=[\sum_{\la\in
P(m)}\sum_{i=1}^{k_{\la}}c_{\la\setminus\la_{i},\la_{i}}c(-\la\setminus\la_{i})a(-\la_{i}))]^{k}v_{0}$$
 satisfying
$$
q_{i}\la_{i}\ell c_{\la\setminus\la_{j},\la_{j}}+c_{\la\setminus\la_{i}\setminus\la_{j},\la_{i}+\la_{j}}=0,
\ i,j=1,2,\cdots,k_{\la}, i\neq j,
$$$$\label{dd}c_{\la\setminus\la_{i},\la_{i}}(-\cc+\la_{i}\ell)-\sum_{j=1}^{k_{\la}}
c_{\la\setminus\la_{i}\setminus\la_{j},\la_{i}+\la_{j}}=0,
\ i=1,2,\cdots,k_{\la}
$$
generates a non-trivial submodule of $V_{\H}(\ell,\al,\be,\ga)$,
 where in (i) and (ii), $P(m)$ is the set of all partitions of weight $m$,
$\la^{(k)}=(\la,\la,\cdots,\la)\in \Z_{+}^{k}$,
$\la=(\la_{1}^{(q_{1})}, $ $\la_{2}^{(q_{2})},\cdots,
\la_{k_{\la}}^{(q_{k_{\la}})})$ such that
$\sum_{i=1}^{k_{\la}}q_{i}\la_{i}=m$, and
$\la\setminus\la_{i}=(\la_{1}^{(q_{1})},\cdots,
\la_{i}^{(q_{i}-1)},\cdots, \la_{k_{\la}}^{(q_{k_{\la}})})$ and if
$\la=\la_{i}$, then
$c(-\la\setminus\la_{i})b(-\la_{i})=b(-\la_{i})$,
$c(-\la\setminus\la_{i})a(-\la_{i})=a(-\la_{i})$;

\item[(iii)] If $\ell=0$, then for each $\la\in{\mathcal P}$,
$u=c(-\la)v_{0} $
generates a non-trivial submodule of $V_{\H}(\ell,\al,\be,\ga)$.
\end{itemize}
\end{theo}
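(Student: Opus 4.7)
The plan is to run the argument in close parallel with the previous theorem, with the intermediate series module $V(\al,\be,\ga)$ replacing the Verma module $M(\la)$ and the scalar $\be$ taking the role of $\cc$ throughout. The hypotheses $\be\neq 0$ and $\al+\ga\notin\bZ$ ensure that $V(\al,\be,\ga)$ is itself an irreducible $H_4$-module, so the overall architecture carries over; the only new feature is the nonvanishing actions $av_0=-\be v_1$ and $bv_0=(\al+\ga)v_{-1}$, which require extra bookkeeping but not a change of method.

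For the irreducibility half, first assume $\ell\neq 0$ and $\be+n\ell\neq 0$ for all $n\in\bZ$. As in the proof of Theorem~\ref{Theorem1}, view $V_{\H}(\ell,\al,\be,\ga)$ as a module over the Heisenberg subalgebra $\mathfrak{s}=\bigoplus_{n\in\bZ_+}(\bC c(n)\oplus\bC d(-n))\oplus\bC\k$; since $\k$ acts by $\ell\neq 0$, it decomposes as a direct sum of irreducible highest weight $\mathfrak{s}$-modules whose highest weight vectors lie in
\[
\mathcal{N}=\mathrm{span}\{c(-\la)a(-\mu)b(-\nu)v_n\mid\la,\mu,\nu\in\mathcal{P},\ n\in\bZ\}.
\]
Starting from a nonzero submodule $U$, a Heisenberg reduction yields a nonzero $u\in U\cap\mathcal{N}$. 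The key commutator identities
\[
[a(n),b(-n)]=c(0)+n\k,\quad [b(n),a(-n)]=-c(0)+n\k,\quad [d(n),c(-n)]=n\k
\]
act on $\mathcal{N}$ as the scalars $\be+n\ell$, $-\be+n\ell$, and $n\ell$, all nonzero under the hypotheses. Applying $b(\mu_1)$ for $\mu_1$ the largest part of $\mu$ strips one factor $a(-\mu_1)$ (correction terms $c(\mu_1-\mu_i)$ for $\mu_i<\mu_1$ have positive mode and annihilate $v_n$ after commuting through the remaining creation operators); similarly $a(\nu_1)$ strips $b(-\nu_1)$, and $d(\la_1)$ strips $c(-\la_1)$. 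Iteration produces a nonzero element of $V(\al,\be,\ga)\cap U$, and $H_4$-irreducibility of $V(\al,\be,\ga)$ then forces $U=V_{\H}(\ell,\al,\be,\ga)$.

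For the reducibility assertions, the key observation is that each vector $u$ listed in (i), (ii), (iii) is annihilated by $\H^{(+)}$ (the strictly positive modes). Granting this, $U(\H)u=U(\H^{(-)})U(H_4)u$ is supported in strictly positive $\bN$-degree in the grading with $V(\al,\be,\ga)$ at degree zero, so $v_0\notin U(\H)u$ and the submodule is proper. In case (iii) with $\ell=0$, the vanishings $(c,c)=(c,a)=(c,b)=0$ together with $\ell=0$ imply $[c(n),x(-\la_i)]=0$ for every $x\in H_4$, so $c(-\la)v_0$ is killed by $\H^{(+)}$ automatically. In cases (i) and (ii), the vectors $u$ and the coefficient systems (\ref{aa})--(\ref{bb}) and (\ref{cc})--(\ref{ee}) are formally identical to those obtained in the previous theorem, because the check $\H^{(+)}u=0$ involves only positive-mode computations in $\H$ and the scalar action $cv_0=\be v_0$, and does not invoke the mode-zero action of $a$ or $b$ on $v_0$; the condition $\be\pm m\ell=0$ is precisely the consistency condition for a nonzero solution.

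The main obstacle will be the bookkeeping in the Heisenberg reduction. A generic element of $U$ is a sum of $\mathfrak{s}$-highest weight components of several shapes, paralleling the decomposition $X_1,X_{21},X_{22},X_{23},X_{31},X_{32},X_{33}$ in the proof of Theorem~\ref{Theorem1}, and commuting positive-mode operators past the creation factors produces cascades of correction terms. Organising these into a clean seven-subcase analysis of which block is nonvanishing is the technical heart of the proof; the extra actions $av_n=-\be v_{n+1}$ and $bv_n=(\al+\ga+n)v_{n-1}$ contribute intermediate terms that must be tracked carefully, but they do not create new obstructions beyond the scalar conditions $\be\pm n\ell\neq 0$.
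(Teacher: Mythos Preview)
Your proposal is correct and aligns with the paper's approach: the paper itself offers no separate proof for this theorem, relying entirely on the argument for the generalized Verma module case (indicated only by ``Similarly, we have'' before the statement and a bare \qed\ after), and you have correctly identified both the structural parallel and the one genuine adaptation---replacing $\cc$ by $\be$, while noting that the verification of $\H^{(+)}u=0$ is insensitive to the mode-zero actions of $a$ and $b$ on $v_0$ that distinguish the intermediate series from the Verma setting. Your acknowledgment that the informal ``stripping'' heuristic must ultimately be organized into the seven-subcase analysis of Theorem~\ref{Theorem1} is exactly right.
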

\qed

\section{\bf Vertex operator algebra structure associated to $\H$}
We assume that the reader is familiar with the basic knowledge on the notations of
vertex operator algebras and their weak modules, admissible modules and ordinary modules.

For $h\in H_{4}$ we define the generating function
$$
h(x)=\sum_{n\in\bZ}(h\ot t^n)x^{-n-1}\in\H[[x,x^{-1}]].
$$
Then the defining relations (\ref{DefinitonANW})  can be equivalently written as
\begin{eqnarray}
[h_1(x_1),h_2(x_2)]=[h_1,h_2](x_2)x_2^{-1}\de\left(\frac{x_1}{x_2}\right)
-(h_1,h_2)\frac{\pa}{\pa x_1}x_2^{-1}\de\left(\frac{x_1}{x_2}\right)\k.\label{CM}
\end{eqnarray}
Given an $\H$-module $W$, let $h(n)$ denote the operator on $W$
corresponding to $h\ot t^{n}$ for $h\in H_{4}$ and $n\in\bZ$.  We
shall use the notation for  the action of $h(x)$ on $W$:
\begin{equation}
h_W(x)=\sum_{n\in\bZ}h(n)x^{-n-1}\in(\End W)[[x,x^{-1}]].
\end{equation}
\begin{defi}
Let $W$ be a {\it restricted}  $\H$-module  in the sense that for
every  $h\in H_{4}$ and $w\in W$, $h(n)w=0$ for $n$ sufficiently
large. We say that the $\H$-module $W$ is of level $\ell$ if the
central element $\k$ acts as a scalar $\ell$ in $\bC$.
\end{defi}

Let  $\ell$ be a complex number. Consider the
induced module defined as (\ref{Vac})(let $\d=0$):
$$
V_{\H}(\ell,0)=U(\H)\ot_{U(\H^{(\leq 0)})}v_{0}.
$$
 Set
$$
\1=v_0\in V_{\H}(\ell,0).
$$
Then
$$
V_{\H}(\ell,0)=\coprod_{n\geq0}V_{\H}(\ell,0)_{(n)},
$$
where $V_{\H}(\ell,0)_{(n)}$ is spanned by  the vectors
$$
h^{(1)}_{-m_1}\cdots h^{(r)}_{-m_r}\1
$$
for $r\geq0,\ h^{(i)}\in H_{4},\ m_i\geq1$, with $n=m_1+\cdots+m_r$. It
is clear that $V_{\H}(\ell,0)$ is a restricted $\H$-module of level $\ell$. We can regard $\H$ as a subspace
 of $V_{\H}(\ell,0)$ through the map
 $$
\H\to V_{\H}(\ell,0), \quad h\mapsto h(-1)\1.
 $$
In fact, $\H=V_{\H}(\ell,0)_{(1)}.$

\begin{theo}[cf. \cite{LL,Lian}]\label{VA} Let $\ell$ be any complex number. Then
there exists  a unique vertex  algebra structure $(V_{\H}(\ell,0),
Y, \1)$ on $V_{\H}(\ell,0)$ such that $ \1 $ is the vacuum vector
and
$$
Y(h,x)=h(x)\in (\End V_{\H}(\ell,0)  )[[x,x^{-1}]]
$$
for $h\in H_{4}$. For $r\geq0, h^{(i)}\in\H, n_i\in\bZ_{+}$, the
vertex operator map for this vertex  algebra structure is given by
\begin{eqnarray*}
Y(h^{(1)}(n_1)\cdots
h^{(r)}(n_r){\bf1},x)
&=&\NO\pa^{(-n_1-1)}h^{(1)}(x)\cdots
\pa^{(-n_r-1)}h^{(r)}(x)\NO1,
\end{eqnarray*}
where
$$
\partial^{(n)} = \frac{1}{n!}
\left(
\frac{d}{dx}
\right)^n
$$
$\NO\NO$ is the normal ordering,
and
$1$ is the identity operator on $V_{\H}(\ell,0)$.
\end{theo}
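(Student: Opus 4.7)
The plan is to invoke a general construction theorem that produces a vertex algebra from a system of pairwise local generating fields, following the approach of H.-S.~Li developed in \cite{LL} (and similarly exploited in \cite{Lian} for nonabelian WZNW models). Two hypotheses must be verified: first, that the fields $\{h(x):h\in H_{4}\}$ are pairwise mutually local as $(\End V_{\H}(\ell,0))$-valued series; and second, that $V_{\H}(\ell,0)$ is generated from the vacuum $\1$ by the modes of these fields. The latter is immediate from the construction of $V_{\H}(\ell,0)$ as an induced module together with the PBW theorem; the former is the substantive point.

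The key computation is therefore the locality check. From the commutator formula (\ref{CM}),
\begin{eqnarray*}
[h_{1}(x_{1}),h_{2}(x_{2})] = [h_{1},h_{2}](x_{2})\,x_{2}^{-1}\delta(x_{1}/x_{2}) - \ell\,(h_{1},h_{2})\frac{\partial}{\partial x_{1}}x_{2}^{-1}\delta(x_{1}/x_{2}),
\end{eqnarray*}
and multiplication by $(x_{1}-x_{2})^{2}$ annihilates both terms on the right-hand side via the standard delta-function identities $(x_{1}-x_{2})x_{2}^{-1}\delta(x_{1}/x_{2})=0$ and $(x_{1}-x_{2})^{2}\frac{\partial}{\partial x_{1}}x_{2}^{-1}\delta(x_{1}/x_{2})=0$. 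Hence the generating fields are pairwise local of mutual order at most $2$. Combined with the fact that $V_{\H}(\ell,0)$ is tautologically a restricted $\H$-module of level $\ell$, the construction theorem (cf.\ \cite{LL}) then yields a vertex algebra structure on $V_{\H}(\ell,0)$ with $\1$ as vacuum and $Y(h,x)=h(x)$ for $h\in H_{4}$.

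The explicit formula for $Y$ on a PBW monomial $h^{(1)}(n_{1})\cdots h^{(r)}(n_{r})\1$ then follows from the identification, intrinsic to the construction, of the vertex operator attached to an iterate of creation operators with the iterated normal-ordered product of the corresponding generating fields, exactly as in the affine Kac--Moody case treated in \cite{FZ}. Uniqueness holds by the reconstruction principle: once $Y(h,x)$ is fixed on the generators $h\in H_{4}\subset V_{\H}(\ell,0)_{(1)}$, the weak associativity of vertex operators forces the formula for $Y(v,x)$ on every PBW monomial $v$, and these span $V_{\H}(\ell,0)$. The only real work is the bookkeeping needed to match the construction-theorem output with the claimed normal-ordered product formula; the nonabelian, non-reductive character of $H_{4}$ plays no role at this stage, entering only in the subsequent analysis of modules over this vertex algebra.
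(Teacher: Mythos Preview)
Your proposal is correct and follows precisely the route the paper intends: the theorem is stated in the paper with the citation ``cf.\ \cite{LL,Lian}'' and given no proof, so the implicit argument is exactly the local-fields/reconstruction theorem you have sketched. Your locality computation from (\ref{CM}) and the appeal to the PBW generation of $V_{\H}(\ell,0)$ are the standard ingredients, and nothing more is needed here.
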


\begin{pro}[cf. \cite{LL}]\label{VAM}
Any module $W$ for
the vertex  algebra $V_{\H}(\ell,0)$ is naturally a
restricted  $\H$-module of level $\ell$, with $h_{W}(x)=Y_W(h,x)$ for $h\in H_{4}$. Conversely,
any restricted $\H$-module $W$ of level $\ell$  is naturally
a $V_{\H}(\ell,0)$-module as vertex algebra with
$$
Y_W(h^{(1)}(n_1)\cdots
h^{(r)}(n_r){\bf1},x)=\NO\pa^{(-n_1-1)}h^{(1)}_W(x)\cdots
\pa^{(-n_r-1)}h^{(r)}_W(x)\NO1_W,
$$
for $r\geq0, h^{(i)}\in\H, n_i\in\bZ$. Furthermore,
for any $V_{\H}(\ell,0)$-module $W$, the $V_{\H}(\ell,0)$-submodules of $W$ coincide with the
$\H$-submodules of $W$.
\end{pro}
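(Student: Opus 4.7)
The plan is to handle the two directions of the equivalence separately, leveraging the standard relationship between affine-type Lie algebras and affine vertex algebras developed in~\cite{LL,Lian}, then deduce the submodule correspondence as a formal consequence.

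For the forward direction, let $W$ be a $V_{\H}(\ell,0)$-module with vertex operator map $Y_W$. Setting $h_W(x)=Y_W(h,x)=\sum_{n\in\bZ}h(n)x^{-n-1}$ for $h\in H_4$ defines operators $h(n)\in\End W$. The lower-truncation axiom for module vertex operators forces $h(n)w=0$ for $n\gg 0$, so $W$ is restricted. Reading off the OPE from Theorem~\ref{VA},
\[
Y(h_1,x)h_2 \;=\; h_1(-1)h_2(-1)\1 \;+\; [h_1,h_2](-1)\1\cdot x^{-1} \;+\; \ell(h_1,h_2)\1\cdot x^{-2},
\]
and feeding it into the standard commutator formula
\[
[Y_W(u,x_1),Y_W(v,x_2)]\;=\;\Res_{x_0}\,x_2^{-1}\de\!\left(\tfrac{x_1-x_0}{x_2}\right)Y_W(Y(u,x_0)v,x_2)
\]
with $u=h_1,\,v=h_2\in H_4$, one recovers precisely (\ref{CM}) on $W$. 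Moreover $Y_W(\1,x)=\id_W$, and since $\k$ acts on $V_{\H}(\ell,0)$ as the scalar $\ell$, it acts as $\ell$ on $W$ as well, so $W$ is a restricted $\H$-module of level $\ell$.

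For the converse, let $W$ be a restricted $\H$-module of level $\ell$. The fields $\{h_W(x):h\in H_4\}$ are pairwise mutually local by (\ref{CM}). A standard construction theorem (the local-systems/universal-property approach in~\cite{LL}) then yields a unique vertex algebra homomorphism from $V_{\H}(\ell,0)$ into the vertex algebra of local fields on $W$ sending $h\in H_4\subset V_{\H}(\ell,0)_{(1)}$ to $h_W(x)$; under the reconstruction formula, this homomorphism sends a PBW monomial $h^{(1)}(-n_1-1)\cdots h^{(r)}(-n_r-1)\1$ to the iterated normal-ordered product written in the statement. The main technical point, and the one obstacle in the argument, is verifying the hypotheses of this construction theorem: one must check that the fields generated from the $h_W(x)$ by derivatives and normal-ordered products form a local system satisfying weak associativity, so that the proposed $Y_W$ really is a module vertex operator, and here we rely on the general machinery of~\cite{LL}. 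Once this is in place the submodule assertion is immediate: any $V_{\H}(\ell,0)$-submodule is stable under each $h(n)=\Res_x x^n Y_W(h,x)$ and hence is $\H$-stable, while conversely an $\H$-submodule is stable under each $h_W(x)$ and therefore, by the explicit normal-ordered formula for $Y_W$, under $Y_W(v,x)$ for every $v\in V_{\H}(\ell,0)$.
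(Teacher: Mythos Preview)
Your argument is correct and is precisely the standard approach from \cite{LL} that the paper invokes; the paper itself does not supply an independent proof of this proposition but simply cites \cite{LL}, and your sketch faithfully unpacks that reference. The only minor point is notational: in your reconstruction formula you reindex the modes as $h^{(i)}(-n_i-1)$ with $n_i\geq 0$, whereas the paper's statement writes $h^{(i)}(n_i)$ with $n_i\leq -1$, but this is harmless.
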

\begin{rem}
Theorem \ref{VA} and Proposition \ref{VAM} in fact hold for the general quadratic Lie algebra , i.e.,
 a (possibly infinite-dimensional) Lie algebra equipped with a symmetric invariant
bilinear form.
\end{rem}

In the following, we shall show that $V_{\H}(\ell,0)$ is in fact a vertex operator algebra under
certain conditions.

Let $\ell$ be a non-zero complex number. Set
\begin{eqnarray}
\om&=&\frac{1}{2\ell}\left(a(-1)b(-1)+b(-1)a(-1)+c(-1)d(-1)+d(-1)c(-1)\right)\1-\frac{1}{2\ell^2}c(-1)c(-1)\1\nonumber\\
&=&\frac{1}{\ell}\left(a(-1)b(-1)\1+c(-1)d(-1)\1\right)-\frac{1}{2\ell}c(-2)\1-\frac{1}{2\ell^2}c(-1)c(-1)\1\label{CV}
\end{eqnarray}
and define operators $L(n)$ for $n\in\bZ$ by
$$
Y(\om,x)=\sum_{n\in\bZ}\om_nx^{-n-1}=\sum_{n\in\bZ}L(n)x^{-n-2}.
$$

Next, we will follow \cite{DL} in using the vertex algebra structure to establish the Virasoro algebra relations,
rather than directly calculating the commutators.

\begin{pro}
Let $\ell$ be a complex number such that $\ell\neq0$. Then for $h\in H_4$ and $m,n\in\bZ$,
\begin{eqnarray}
&&[L(m),h(n)]=-nh(m+n),\label{DG}\\
&&[L(m),L(n)]=(m-n)L(m+n)+\frac{1}{3}(m^3-m)\de_{m+n,0},\label{VR}
\end{eqnarray}
on any restricted $\H$-module $W$ of level $\ell$. In particular,
these relations hold on $V_{\H}(\ell,0)$ and
\begin{eqnarray*}
&&L(0)v=nv,\quad\text{for}\ v\in V_{\H}(\ell,0),\ n\geq0,\\
&&L(-1)=\mathcal{D},
\end{eqnarray*}
where $\mathcal{D}$ is the $\mathcal{D}$-operator of the vertex algebra $V_{\H}(\ell,0)$ defined by
$$
\mathcal{D}v=v_{-2}\1\quad\text{for}\ v\in V_{\H}(\ell,0).
$$
\end{pro}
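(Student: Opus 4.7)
The plan is to appeal to Proposition~\ref{VAM}: any restricted $\H$-module of level $\ell$ carries a canonical $V_{\H}(\ell,0)$-module structure, so it suffices to verify the asserted identities on $V_{\H}(\ell,0)$ itself. Following the vertex-algebra approach of \cite{DL}, I would use the Borcherds commutator formula
\begin{equation*}
[u_m, v_n] = \sum_{i\geq 0} \binom{m}{i}(u_iv)_{m+n-i}
\end{equation*}
and reduce everything to computing the $n$-products $\omega_i h$ (for $h\in H_{4}$) and $\omega_i\omega$ for small $i$.

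For the derivation property (\ref{DG}), writing $L(m)=\omega_{m+1}$, I would compute $\omega_i h$ for each $h\in\{a,b,c,d\}$ and $i\geq 0$ directly from the explicit expression for $\omega$ in (\ref{CV}) and the bracket relations (\ref{CM}). The targets are $\omega_0 h=\mathcal{D}h$, $\omega_1 h=h$, and $\omega_i h=0$ for $i\geq 2$; combined with the general identity $(\mathcal{D}h)_n=-n h_{n-1}$, the commutator formula then yields exactly $[L(m),h(n)]=-n h(m+n)$. The correction summands $-\tfrac{1}{2\ell}c(-2)\1$ and $-\tfrac{1}{2\ell^{2}}c(-1)c(-1)\1$ in (\ref{CV}) are there precisely to compensate for the central contribution in $[a(m),b(n)]$ and for the fact that $c$ acts through $\k$ even though $(c,c)=0$; this reflects the passage from the Casimir $\Omega$ to the modified Casimir $\tilde\Omega_\ell$ introduced in (\ref{Ca}).

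For the Virasoro relation (\ref{VR}), the analogous step is to compute $\omega_i\omega$ for $i=0,1,2,3$ and confirm $\omega_i\omega=0$ for $i\geq 4$, with the expected outcomes $\omega_0\omega=\mathcal{D}\omega$, $\omega_1\omega=2\omega$, $\omega_2\omega=0$, and $\omega_3\omega=2\cdot\1$, so that the central charge is $c=4=\dim H_{4}$ and $\tfrac{c}{12}(m^{3}-m)=\tfrac{1}{3}(m^{3}-m)$ matches the coefficient in (\ref{VR}). This is where the main obstacle lies: it requires careful bookkeeping of multiple contractions in normal-ordered products of four-term expressions, and one must verify that the extra $c(-2)\1$ and $c(-1)c(-1)\1$ terms cancel the spurious contributions from the naive Sugawara piece. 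Once these $n$-products are in hand, the commutator formula above delivers (\ref{VR}) immediately.

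Finally, the grading assertion $L(0)v=nv$ on $V_{\H}(\ell,0)_{(n)}$ follows from (\ref{DG}) together with $L(0)\1=0$, since $[L(0),h(-k)]=k\,h(-k)$ shows that $L(0)$ acts as the total weight on each PBW monomial. The identity $L(-1)=\mathcal{D}$ is obtained from the relation $\omega_{0}h=\mathcal{D}h$ established above, on noting that both $L(-1)=\omega_{0}$ and $\mathcal{D}$ act as derivations of the mode algebra and agree on the generating set $\{h(-1)\1:h\in H_{4}\}\cup\{\1\}$.
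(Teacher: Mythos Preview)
Your proposal is correct, and at the level of general strategy it matches the paper: reduce both (\ref{DG}) and (\ref{VR}) via the Borcherds commutator formula to a finite list of $n$-products, then verify those by hand. The difference lies in \emph{which} $n$-products you choose to compute.

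For (\ref{DG}), you compute $\omega_i h$, whereas the paper computes $h_i\omega$ (after rewriting the relation as $[h(n),L(m)]=nh(m+n)$). Since $h$ is a single generator while $\omega$ is a quadratic expression, pushing $h(i)$ through $\omega$ is mechanically lighter than pushing $\omega_i$ through $h$; the target identity $h_n\omega=\delta_{n,1}h$ for $n\ge 0$ is then a short computation for each of $n=0,1,2$. For (\ref{VR}), this pays off again: once (\ref{DG}) is in hand, the paper computes $\omega_{n+1}\omega=L(n)\omega$ not by OPE/Wick bookkeeping but simply by commuting $L(n)$ through the modes $a(-1),b(-1),c(-1),d(-1),c(-2)$ appearing in the explicit expression for $\omega$, using $[L(n),h(-k)]=k\,h(n-k)$. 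This completely sidesteps the ``multiple contractions in normal-ordered products of four-term expressions'' that you flagged as the main obstacle in your route. Your direct computation of $\omega_i\omega$ would of course work, but the paper's ordering of the argument turns that step into a routine mode-commutator calculation. The treatment of $L(0)$ and $L(-1)=\mathcal{D}$ is essentially the same in both.
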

\begin{proof}
By Theorem \ref{VA}, any restricted $\H$-module $W$ of level $\ell$
is naturally a $V_{\H}(\ell,0)$-module. Then relation (\ref{DG}) can
be written as
\begin{equation}
[h(n),L(m)]=nh(m+n), \end{equation}
for \ $h\in H,\ m,n\in\bZ$.
Equivalently, in terms of generating function, we have
\begin{equation}
[Y(h,x_1),Y(\om,x_2)]=-h(x_2)\frac{\pa}{\pa x_1}x_2^{-1}\de\left(\frac{x_1}{x_2}\right).
\end{equation}
By the commutator formula (\cite{LL}) for the vertex algebra and modules, it suffices to prove
\begin{equation}
h_n\om=\de_{n,1}h,\quad\text{for}\ n\geq0.
\end{equation}
Since $V_{\H}(\ell,0)$ is $\bZ$-graded $\H$-module with $V_{\H}(\ell,0)_{(n)}=0$ for $n< 0$, then
\begin{equation}
h(m)\om=0,\quad\text{for}\ m>2.
\end{equation}
Next we compute $h(2)\om, h(1)\om,$ and $ h(0)\om$, using the relation (\ref{CM}) with $\k$ acting as $\ell$.

\begin{eqnarray*}
\ell h(2)\om&=&h(2)\left(a(-1)b(-1)\1+c(-1)d(-1)\1-\frac{1}{2} c(-2)\1-\frac{1}{2\ell} c(-1)c(-1)\1\right)\\
&=&(a(-1)h(2)+[h,a](1))b(-1)\1+(c(-1)h(2)+[h,c](1))d(-1)\1\\
&-&\frac{1}{2}(c(-2)h(2)+2(h,c)\ell)\1-\frac{1}{2\ell}c(-1)c(-1)h(2)\1\\
&=&[h,a](1)b(-1)\1-( h,c)\ell\1\\
&=&([h,a],b)\ell\1-(h,c)\ell\1\\
&=&0.
\end{eqnarray*}

\begin{eqnarray*}
\ell h(1)\om&=&h(1)\left(a(-1)b(-1)\1+c(-1)d(-1)\1-\frac{1}{2} c(-2)\1-\frac{1}{2\ell} c(-1)c(-1)\1\right)\\
&=&(a(-1)h(1)+[h,a](0)+(h,a)\k)b(-1)\1+(c(-1)h(1)+(h,c)\k)d(-1)\1\\
& &-\frac{1}{2\ell}(c(-1)h(1)+(h,c)\k)c(-1)\1\\
&=&a(-1)(h, b)\ell\1+[[h,a],b](-1)\1+b(-1)(h,a)\ell\1+c(-1)(h,d)\ell\1\\
&&+d(-1)(h,c)\ell\1- c(-1)(h,c)\1\\
&=&\ell h(-1)\1\\
&=&\ell h.
\end{eqnarray*}

\begin{eqnarray*}
\ell h(0)\om&=&h(0)\left(a(-1)b(-1)\1+c(-1)d(-1)\1-\frac{1}{2} c(-2)\1-\frac{1}{2\ell} c(-1)c(-1)\1\right)\\
&=&(a(-1)h(0)+[h,a](-1))b(-1)\1+c(-1)h(0)d(-1)\1-\frac{1}{2\ell}(c(-1)c(-1)h(0)\1\\
&=&a(-1)[h,b](-1)\1+[h,a](-1)b(-1)\1+c(-1)[h,d](-1)\1\\
&=&0.
\end{eqnarray*}
For $h\in H, \ n\in\bZ$, we have
\begin{equation}
[L(-1)-\mathcal{D},h(n)]=0
\end{equation}
as operators on $V_{\H}(\ell,0)$ and
\begin{equation}
(L(-1)-\mathcal{D})\1=\om_0\1-\1_{-2}\1=0.
\end{equation}
It follows that $L(-1)=\mathcal{D}$ on $V_{\H}(\ell,0)$. Similarly, $L(0)=D$, where the weight operator
$D$ is defined by $D v=n v$ for $v\in V_{\H}(\ell,0)_{(n)}$ with $n\in\bZ$. For the Virasoro relations (\ref{VR}), it suffices to prove
\begin{eqnarray*}
&&\om_1\om=L(0)\om=2\om,\\
&&\om_3\om=L(2)\om=2\1,\\
&&\om_n\om=L(n-1)\om=0,\\
\end{eqnarray*}
for $n=2,\ n\geq4.$ By (\ref{DG}), we get
\begin{eqnarray*}
L(0)\om&=&\frac{1}{\ell}L(0)\left(a(-1)b(-1)\1+c(-1)d(-1)\1-\frac{1}{2}c(-2)\1-\frac{1}{2\ell} c(-1)c(-1)\1\right)\\
&=&2\om.\\
L(2)\om&=&\frac{1}{\ell}L(2)\left(a(-1)b(-1)\1+c(-1)d(-1)\1-\frac{1}{2}c(-2)\1-\frac{1}{2\ell} c(-1)c(-1)\1\right)\\
&=&\frac{1}{\ell}\left([L(2),a(-1)]b(-1)\1+[L(2),c(-1)]d(-1)\1-c(0)\1-\frac{1}{2\ell} [L(2),c(-1)]c(-1)\1\right)\\
&&-\frac{1}{\ell}\left(a(-1)b(1)\1+c(-1)d(1)\1-\frac{1}{2\ell} c(-1)c(1)\1\right)\\
&=&\frac{1}{\ell}\left(a(1)b(-1)\1+c(1)d(-1)\1-c(0)\1-\frac{1}{2\ell} c(1)c(-1)\1\right)\\
&=&\frac{1}{\ell}\left((a,b)\k\1+(c,d)\k\1\right)\\
&=&2\1.\\
\end{eqnarray*}
\begin{eqnarray*}
L(1)\om&=&\frac{1}{\ell}L(1)\left(a(-1)b(-1)\1+c(-1)d(-1)\1-\frac{1}{2}c(-2)\1-\frac{1}{2\ell} c(-1)c(-1)\1\right)\\
&=&\frac{1}{\ell}\left(a(0)b(-1)\1+c(0)d(-1)\1-c(-1)\1-\frac{1}{2\ell} c(0)c(-1)\1\right)\\
&=&\frac{1}{\ell}\left(c(-1)\1-c(-1)\1\right)\\
&=&0.\\
L(n)\om&=&\frac{1}{\ell}L(n)\left(a(-1)b(-1)\1+c(-1)d(-1)\1-\frac{1}{2}c(-2)\1-\frac{1}{2\ell} c(-1)c(-1)\1\right)\\
&=&\frac{1}{\ell}\left(a(n-1)b(-1)\1+c(n-1)d(-1)\1-c(n-2)\1-\frac{1}{2\ell} c(n-1)c(-1)\1\right)\\
&=&0
\end{eqnarray*}
for $n\geq3$.
\end{proof}
\begin{rem}
The construction of the conformal vector $\om$, named Nappi-Witten construction,  is different from the Segal-Sugawara construction for
affine Lie algebras \cite{LL, Wang}, because it is easy to check  that  the action of the Casimir element $\Om$ defined in (\ref{Ca})
 acts on $H_4$(under the adjoint representation) is not a scalar. For more general study on conformal vectors,  we refer the reader to \cite{Lian}.
\end{rem}

Summarizing, we have

\begin{theo}
Let $\ell$ be a complex number such that $\ell\neq0$. Then the vertex algebra $V_{\H}(\ell,0)$
constructed in Theorem \ref{VA} is a vertex operator algebra of central charge $4$ with $\om$ defined
in (\ref{CV}) a confomral vector. The $\bZ$-grading on $V_{\H}(\ell,0)$ is given by $L(0)$-eigenvalues.
Moreover, $H=V_{\H}(\ell,0)_{(1)}$, which generates $V_{\H}(\ell,0)$ as vertex algebra, and
$$
[h(n),L(m)]=nh(m+n),\quad\text{for}\ h\in H,\ m,n\in\bZ.
$$
\end{theo}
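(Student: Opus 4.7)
The plan is to assemble the theorem from Theorem~\ref{VA} (which supplies the underlying vertex algebra structure on $V_{\H}(\ell,0)$, the vacuum $\1$, and the vertex operator map) together with the preceding Proposition (which supplies the Virasoro relations, $L(-1) = \mathcal{D}$, and the action of $L(0)$), plus a small amount of PBW bookkeeping. Since the hard analytic work has already been carried out in the Proposition, this final theorem is essentially a packaging statement.

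First I would verify the vertex operator algebra axioms for the pair $(\1, \om)$. The Proposition already establishes $[L(m),L(n)] = (m-n)L(m+n) + \frac{1}{3}(m^3-m)\de_{m+n,0}$, and matching this with the standard normalization $[L(m),L(n)] = (m-n)L(m+n) + \frac{c}{12}(m^3-m)\de_{m+n,0}$ gives central charge $c = 4$. The Proposition also supplies $L(-1) = \mathcal{D}$ and $L(0)v = nv$ for $v \in V_{\H}(\ell,0)_{(n)}$, so $L(0)$ is diagonalizable with non-negative integer eigenvalues. The grading by $L(0)$-eigenvalues therefore coincides with the PBW grading described after the definition of $V_{\H}(\ell,0)$. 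Finite-dimensionality of each $V_{\H}(\ell,0)_{(n)}$ follows from PBW, since the spanning monomials $h^{(1)}(-m_1)\cdots h^{(r)}(-m_r)\1$ of weight $n$ correspond to finitely many ordered choices with $m_1+\cdots+m_r = n$, $m_i\geq 1$, and $h^{(i)}$ ranging over a basis of the $4$-dimensional space $H_4$; boundedness from below is immediate since $V_{\H}(\ell,0)_{(n)} = 0$ for $n<0$. This completes the VOA verification.

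Next I would read off the remaining assertions. The identification $H_4 = V_{\H}(\ell,0)_{(1)}$ is immediate from PBW, since the only monomials of weight $1$ are $h(-1)\1$ with $h \in H_4$, and the embedding $h \mapsto h(-1)\1$ is exactly the one introduced before Theorem~\ref{VA}. The explicit formula for the vertex operator map in Theorem~\ref{VA} expresses every PBW monomial as a normally ordered product of derivatives of the generating series $h(x) = Y(h,x)$, $h \in H_4$, so $H_4$ generates $V_{\H}(\ell,0)$ as a vertex algebra. Finally, the relation $[h(n),L(m)] = nh(m+n)$ is merely relation (\ref{DG}) of the Proposition rewritten with the two sides exchanged. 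I expect no substantive obstacle — the only point requiring care is the central-charge bookkeeping described above.
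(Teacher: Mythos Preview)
Your proposal is correct and matches the paper's approach exactly: the paper states this theorem with no proof at all, introducing it with ``Summarizing, we have,'' so it is indeed intended as a packaging of Theorem~\ref{VA} and the preceding Proposition. Your explicit bookkeeping (central charge $c=4$ from $\tfrac{1}{3}=\tfrac{c}{12}$, finite-dimensionality via PBW, the identification of $V_{\H}(\ell,0)_{(1)}$) fills in precisely the routine details the paper leaves implicit.
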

Let $\ell$ be a complex number such that $\ell\neq0$ and $M$ be a
$H_{4}$-module on which the modified Casimir operator
$\tilde\Om_{\ell}$ acting as a scalar $c_M$. Let $W=\Ind_{\H^{(\geq
0)}}^{\H}(M)$. Since $W$ is a restricted $\H$-module of level
$\ell$, by Proposition \ref{VAM}, $W=\Ind_{\H^{(\geq 0)}}^{\H}(M)$
has a unique admissible module structure for the vertex operator
algebra $V_{\H}(\ell,0)$ such that $Y_W(h,x)=h_W(x)$ for $h\in
H_{4}$. Moreover, $W=\coprod_{n\in\bN}W_{(r+n)}$ with $W_{(r)}=M$,
where $r=\frac{c_M}{2\ell}$. In particular, if $M$ is
finite-dimensional, $W$ is an ordinary module for the vertex
operator algebra $V_{\H}(\ell,0)$.

\begin{theo}
For $\ell\neq0$ and $\d\in\bC$,  the $\H$-module $V_{\H}(\ell,\d)$
is naturally an irreducible ordinary module  for the vertex operator
algebra $V_{\H}(\ell,0)$. Furthermore, the modules $V_{\H}(\ell,\d)$
exhaust all the irreducible ordinary $V_{\H}(\ell,0)$-modules
 up to equivalence.
\end{theo}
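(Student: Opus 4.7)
The plan is to establish two separate assertions: (a) that each $V_{\H}(\ell,\d)$ is an irreducible ordinary $V_{\H}(\ell,0)$-module, and (b) that every irreducible ordinary module arises this way.

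For (a), I would first invoke Proposition \ref{VAM}: since $V_{\H}(\ell,\d)$ is manifestly a restricted $\H$-module of level $\ell$, it acquires a canonical $V_{\H}(\ell,0)$-module structure, and $V_{\H}(\ell,0)$-submodules coincide with $\H$-submodules. Then Theorem \ref{Theorem1} provides $\H$-irreducibility for $\ell\neq 0$, giving irreducibility as a vertex algebra module. To check that this module is \emph{ordinary}, I would apply the general statement preceding the theorem (with $M = L(\d)$): since $a v_\d = b v_\d = c v_\d = 0$ and $dv_\d = \d v_\d$, a direct computation of the modified Casimir yields $\tilde\Om_\ell v_\d = 0$, so the lowest $L(0)$-weight is $r = c_M/(2\ell) = 0$. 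Combined with the commutator $[L(0), h(n)] = -nh(n)$, this forces the $L(0)$-grading on $V_{\H}(\ell,\d)$ to agree with the PBW height grading (nonnegative integers, finite-dimensional components, bounded below), which is exactly the ordinary-module condition.

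For (b), let $W$ be an arbitrary irreducible ordinary $V_{\H}(\ell,0)$-module. By Proposition \ref{VAM} it is a restricted $\H$-module of level $\ell$, and ordinariness gives a decomposition $W = \coprod_{n\in\bN} W_{(r+n)}$ with $W_{(r)} \neq 0$ of finite dimension. The top component $W_{(r)}$ is annihilated by all positive modes (they strictly lower weight and $W$ is bounded below), hence carries the structure of an $H_4$-module on which $\k$ acts as $\ell$. A standard argument then shows $W_{(r)}$ is irreducible as an $H_4$-module: any nonzero $H_4$-submodule generates a nonzero $\H$-submodule of $W$ (whose intersection with $W_{(r)}$ recovers it), which must be all of $W$ by irreducibility. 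Now $H_4$ is solvable (its derived series terminates at $\bC c$ after two steps), so by Lie's theorem every finite-dimensional irreducible $H_4$-module over $\bC$ is one-dimensional; on such a module $a = [d,a]$, $b = -[d,b]$, $c = [a,b]$ all act as zero, so $W_{(r)} \cong L(\d)$ for some $\d\in\bC$.

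The classification is then completed by the universal property of induction: sending $v_\d$ to a generator of $W_{(r)}$ produces a nonzero $\H$-module homomorphism $V_{\H}(\ell,\d) \to W$, and since both modules are irreducible (the source by part (a)), this map is an isomorphism. I expect the main obstacle to be the verification that the top weight space $W_{(r)}$ is irreducible as an $H_4$-module and one-dimensional — the one-dimensionality step in particular relies on the solvability of $H_4$ and the fact that on a finite-dimensional irreducible representation every commutator element of the Lie algebra acts as zero; apart from this, all remaining ingredients (Proposition \ref{VAM}, Theorem \ref{Theorem1}, the computation of $c_M$, and induction) are readily available from earlier results in the excerpt.
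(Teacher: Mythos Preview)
Your proposal is correct and follows essentially the same approach as the paper, which simply refers the reader to Theorem~6.2.33 of \cite{LL}. Your write-up makes explicit the one step that is specific to this setting---namely, the solvability of $H_4$ together with Lie's theorem, which force the finite-dimensional irreducible top space $W_{(r)}$ to be one-dimensional and hence of the form $L(\d)$; in the general affine framework of \cite{LL} the top space is allowed to be an arbitrary finite-dimensional irreducible $\mg$-module.
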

\begin{proof}
This proof is completely parallel to the proof of Theorem 6.2.33 in \cite{LL}.
\end{proof}
\begin{rem}
For any vertex operator algebra $V$, Zhu in \cite{Z} constructed an associative
algebra $A(V)$ such that there is one-to-one correspondence between the irreducible admissible $V$-modules
and the irreducible $A(V)$-modules. This fact has been used to classify the irreducible
modules for  vertex operator algebras associated to affine Lie algebra (cf.\cite{FZ}). Similarly,
for $\ell\in\bC^*$, one can show that  $A(V_{\H}(\ell,0))$ is canonically
isomorphic to $U(H_4)$.
\end{rem}

\section{\bf Wakimoto type realizations}
In this section, we shall construct Wakimoto type modules for affine Nappi-Witten algebra $\H$ in terms of vertex operator algebras and their modules.
\subsection{Weyl algebra}
Let $\A$ be the Weyl algebra   with generators
$\be(n),\ga(n)(n\in\bZ)$, ${\bf k}$,  and the following relations
\begin{eqnarray}
[\be(m),\ga(n)]=\de_{m+n,0}{\bf k},\quad
[\ga(m),\ga(n)]=[\be(m),\be(n)]=0, \ [{\bf k}, {\A}]=0.
\end{eqnarray}
Consider the following  irreducible $\A$-module $V_{\A}$ generated
by a vector $\1$ which satisfies:
$$
{\bf k}|_{V_{\A}}={\rm id}, \ \ \be(n)\1=0,\ n\geq0,\quad
\ga(n)\1=0,\ n>0.
$$
Define a linear operator $D$ on $V_{\A}$ by the formulas
$$
D\1=0,\quad [D,\be(n)]=-n\be({n-1}),\quad [D,\ga(n)]=-(n-1)\ga(n-1).
$$
Let
$$
\be(x)=\sum_{n\in\bZ}\be(n)x^{-n-1},\quad \ga(x)=\sum_{n\in\bZ}\ga(n)x^{-n},
$$
Then
\begin{eqnarray*}
[\ga(x_1),\be(x_2)]
=\sum_{m,n\in\bZ}[\ga(m),\be(n)]x_1^{-m}x_{2}^{-n-1}=-x_2^{-1}\de\left(\frac{x_1}{x_2}\right).
\end{eqnarray*}

\begin{theo}[see \cite{FZ}]
There exists  a unique vertex  algebra structure $(V_{\A}, Y, \1)$
on $V_{\A}$ such that $ \1 $ is the vacuum vector and the vertex
operator map for this vertex  algebra structure is given by
$$
Y(\be(-1)\1,x)=\be(x), \quad Y(\ga(0)\1,x)=\ga(x)\in \End
V_{\A}[[x,x^{-1}]]
$$
and
\begin{eqnarray*}
&&Y(\be(-n_1)\cdots \be(-n_r)\ga(-m_1)\cdots
\ga(-m_s)\1,x)\\
&=&\NO\pa^{(n_1-1)}\be(x)\cdots \pa^{(n_r-1)} \be(x)\pa^{(m_1)}
\ga(x)\cdots \pa^{(m_s)} \ga(x)\NO1.
\end{eqnarray*}
for $r,s\geq0, m_i\geq0, n_i\geq1$, where $1$ is the identity
operator on $V_{\A}$.
\end{theo}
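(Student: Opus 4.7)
The plan is to invoke a general construction theorem for vertex algebras from a generating family of mutually local fields, for instance Li's local-system theorem or the reconstruction theorem as exposited in \cite{LL}; the statement is the standard construction of the bosonic $\be\ga$-ghost vertex algebra, and the proof reduces to verifying that the hypotheses of such a theorem are met.

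First I would verify locality. From $[\be(m),\be(n)]=[\ga(m),\ga(n)]=0$ it follows that $[\be(x_1),\be(x_2)]=0$ and $[\ga(x_1),\ga(x_2)]=0$, so these fields are trivially mutually local. The mixed bracket computed just above the theorem,
$$
[\ga(x_1),\be(x_2)]=-x_2^{-1}\de\!\left(\frac{x_1}{x_2}\right),
$$
is annihilated by $(x_1-x_2)$, so $\be(x)$ and $\ga(x)$ are mutually local of order at most one. Second, I would check cyclicity of $\1$: from $\be(n)\1=0$ for $n\geq 0$ and $\ga(n)\1=0$ for $n>0$ together with the PBW theorem for the Weyl algebra $\A$, $V_{\A}$ is spanned by monomials of the form $\be(-n_1)\cdots\be(-n_r)\ga(-m_1)\cdots\ga(-m_s)\1$ with $n_i\geq 1$, $m_j\geq 0$; in particular $\be(x)\1$ and $\ga(x)\1$ lie in $V_{\A}[[x]]$, with constant terms $\be(-1)\1$ and $\ga(0)\1$ respectively.

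These ingredients place us in the hypotheses of the reconstruction theorem, which produces a unique vertex algebra structure $(V_{\A},Y,\1)$ in which $Y(\be(-1)\1,x)=\be(x)$ and $Y(\ga(0)\1,x)=\ga(x)$. The formula for $Y$ on a general monomial is then forced by the iterated normal-ordered-product prescription of the reconstruction theorem, and a direct expansion recovers the displayed expression involving the operators $\pa^{(k)}$. Uniqueness is automatic since $V_{\A}$ is generated as a vertex algebra by $\be(-1)\1$ and $\ga(0)\1$.

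The main technical point, which I would handle with most care, is identifying the externally defined translation operator $D$ on $V_{\A}$ with the canonical translation operator $\mathcal{D}$ of the constructed vertex algebra, since the appearance of the derivatives $\pa^{(k)}\be(x)$ and $\pa^{(k)}\ga(x)$ in the explicit formula is only justified once this identification is in place. Both $D$ and $\mathcal{D}$ annihilate $\1$, and each should satisfy $[T,\be(x)]=\pa\be(x)$ and $[T,\ga(x)]=\pa\ga(x)$ as operators on $V_{\A}$; for $D$ this follows from the prescribed $[D,\be(n)]=-n\be(n-1)$ and $[D,\ga(n)]=-(n-1)\ga(n-1)$ upon expanding the generating series. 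An induction on the length of creation monomials applied to $\1$ then forces $D=\mathcal{D}$, completing the verification.
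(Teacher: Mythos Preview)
The paper does not give its own proof of this theorem: it states the result with the attribution ``see \cite{FZ}'' and moves on immediately to a remark. So there is no in-paper argument to compare against; your proposal supplies exactly the kind of proof one finds in the cited source, namely an application of the reconstruction theorem (or Li's local-system theorem) after checking mutual locality of $\be(x)$ and $\ga(x)$, the creation property $\be(x)\1,\ga(x)\1\in V_{\A}[[x]]$, and cyclicity of $\1$. Your identification of $D$ with the canonical translation operator is the only point requiring care, and you handle it correctly. In short, your write-up is sound and is essentially the standard argument behind the citation; the paper itself simply invokes the reference.
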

\begin{rem}
$V_{\A}$ is not a vertex  operator algebra since it has
infinite dimensional homogeneous components.
\end{rem}

\subsection{ Heisenberg algebras}
Let $\mh$ be a finite dimensional abelian Lie algebra with a
nondegenerate symmetric bilinear form $(\cdot,\cdot)$ and $\hat
{\frak h}={\frak h}\otimes \bC[t,t^{-1}]\oplus \bC\,\c$ the
corresponding affine Lie algebra. Let $\la\in \mh$ and consider the
induced $\hat {\frak h}$-module
$$M(1,\la)=
U(\hat{\frak h})\otimes_{U({\frak h}\otimes{{\C}}[t]
\oplus{{\bC}}\c)}{{\bC}}\simeq S(\frak h\otimes t^{-1}\C[t^{-1}])\ \
\ (\mbox{as vector spaces})
$$
where ${\frak h}\otimes t{{\C}}[t]$ acts trivially on $\C,$
${\frak h}$ acts as $(\alpha,\la)$ for $\alpha\in{\frak h}$ and $\c$
acts as 1.
For $\alpha\in{\frak h}$ and $n\in {\Z}$,  we write $\alpha(n)$
for the operator $\alpha\otimes t^n$ and put
$$\alpha(z)=\sum_{n\in{{\Z}}}\alpha(n)z^{-n-1}.$$
Set ${\bf 1}=1\in\C$. For $\alpha_1, \cdots, \alpha_k \in {\frak
h},\ n_1, \cdots, n_k \in{\Z}_{+}$ and $v=\alpha_1(-n_1)\cdot
\cdot\cdot\alpha_k(-n_k){\bf 1}\in M(1)=M(1,0)$, we define a vertex
operator corresponding to $v$ by
$$Y(v,x)=\mbox{$\circ\atop\circ$}
\partial^{(n_1-1)}\alpha_1(x)\partial^{(n_2-1)}\alpha_2(x)\cdots
\partial^{(n_k-1)}\alpha_k(x)\mbox{$\circ\atop\circ$}.
$$
We can extend $Y$ to all $v\in V$ by linearity. Let
$\{u_1,\cdots,u_d\}$ be an orthonomal basis of ${\frak h}.$ Set
$\omega_{H}=\frac{1}{2}\sum_{i=1}^du_i(-1)^2{\bf 1}$.  The following
theorem is well known :
\begin{theo}[cf. \cite{FLM}]
The space $M(1)=(M(1,0),Y,{\bf 1},\omega_{H})$ is a simple vertex
operator algebra and $M(1,\al)$ for $\al\in {\frak h}$ give a
complete list of inequivalent irreducible modules for $M(1).$
\end{theo}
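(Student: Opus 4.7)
The plan is to derive everything from two standard ingredients: the Stone--von-Neumann-type irreducibility of Fock representations of $\hat{\mathfrak{h}}$ at nonzero level, and Zhu's correspondence between irreducible admissible modules over a vertex operator algebra $V$ and irreducible modules over the Zhu algebra $A(V)$. Because $\mathfrak{h}$ is abelian, $\omega_{H}$ is the standard Segal--Sugawara vector and the verification of the Virasoro relations with central charge $\dim\mathfrak{h}$ is entirely parallel to (in fact simpler than) the calculations carried out in Section 4 for $\H$; I would invoke that template and identify the $L(0)$-grading with the usual total-degree grading on $S(\mathfrak{h}\ot t^{-1}\C[t^{-1}])$, whose homogeneous pieces are finite-dimensional.

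For simplicity of $M(1)$ and irreducibility of each $M(1,\alpha)$, I would argue at the level of $\hat{\mathfrak{h}}$-modules: the analogue of Proposition \ref{VAM} (valid for any quadratic Lie algebra, as noted in the remark after it) specializes here to say that $M(1)$-submodules of $M(1,\alpha)$ coincide with $\hat{\mathfrak{h}}$-submodules. Any nonzero $\hat{\mathfrak{h}}$-submodule $U$ of the Fock space $M(1,\alpha)$ must contain a nonzero vector annihilated by $\hat{\mathfrak{h}}^{+} := \mathfrak{h}\ot t\C[t]$; since $\c$ acts as $1$, the Heisenberg commutation relation $[\alpha(m),\beta(-m)]=m(\alpha,\beta)$ combined with a standard PBW and degree argument forces any such vector to be a scalar multiple of the generator, whence $U=M(1,\alpha)$. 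Specializing to $\alpha=0$ gives simplicity of $M(1)$.

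For the classification, let $W$ be an irreducible admissible $M(1)$-module. Then $W$ is a restricted $\hat{\mathfrak{h}}$-module of level $1$, and the abelian family $\{\alpha(0):\alpha\in\mathfrak{h}\}$ commutes with the $L(0)$-grading. Restricting to the finite-dimensional lowest-degree space yields a common eigenvector $w$ with $\alpha(0)w=(\alpha,\beta)w$ for some $\beta\in\mathfrak{h}$, using the nondegeneracy of $(\cdot,\cdot)$ to identify $\mathfrak{h}^{*}\simeq\mathfrak{h}$. The universal property of $M(1,\beta)$ then produces a nonzero homomorphism $M(1,\beta)\to W$, which is an isomorphism by the irreducibility of both sides, and distinct $\alpha$ give inequivalent modules via the top-space eigenvalues. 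The main obstacle is precisely this extraction of the common eigenvector and the verification that no other isomorphism classes appear; the cleanest route is to compute $A(M(1))\simeq S(\mathfrak{h})$ (a commutative polynomial algebra in $\dim\mathfrak{h}$ variables), whose irreducible representations are one-dimensional and indexed by $\mathfrak{h}^{*}\simeq\mathfrak{h}$, and then appeal to Zhu's bijection with irreducible admissible $M(1)$-modules.
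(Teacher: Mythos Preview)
Your argument is correct and is the standard route to this well-known result. Note, however, that the paper does not actually prove this theorem: it is stated with the attribution ``cf.\ \cite{FLM}'' and no proof is given, as the result is classical. So there is nothing to compare against; you have supplied a genuine proof where the paper simply cites the literature.

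One small remark on your write-up: when you invoke Zhu's bijection to get the full classification, you should also check that every irreducible admissible $M(1)$-module is in fact ordinary (i.e.\ has finite-dimensional $L(0)$-eigenspaces), or else phrase the conclusion for admissible modules. Since $A(M(1))\simeq S(\mathfrak{h})$ has only one-dimensional irreducibles, the top space is one-dimensional and the induced module $M(1,\alpha)$ is already ordinary, so this is immediate---but it is worth saying explicitly.
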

In the following, we always assume that  $\mh=\bC p\oplus \bC q$ is
a $2$-dimensional abelian Lie algebra equipped with the following
symmetric bilinear form $(\ ,\ )$ such  that
$$
(p,q)=1, \quad (p,p)=(q,q)=0.
$$
Set
$$
p(x)=\sum_{n\in\bZ}p(n)x^{-n-1},\quad q(x)=\sum_{n\in\bZ}q(n)x^{-n-1}.
$$
Then
\begin{eqnarray*}
[p(x_1),q(x_2)]&=&-\frac{\pa}{\pa
x_1}x_2^{-1}\de\left(\frac{x_1}{x_2}\right).
\end{eqnarray*}

\subsection{Wakimoto type realization}
\begin{theo}\label{theorem4.4}
Let $\ell$ be a nonzero complex number. Then there exists a
homomorphism of vertex algebras
$$
\Phi: V_{\H}(\ell,0)\lra V_{\A}\ot M(1)\\
$$
defined by
\begin{eqnarray*}
&&a(x)\mapsto  \be(x),\\
&&b(x)\mapsto \ell\ga'(x)+p(x)\ga(x),\\
&&c(x)\mapsto  p(x),\\
&&d(x)  \mapsto \ell q(x)+\frac{1}{2}\ell^{-1} p(x)-\NO\be(x)\ga(x)\NO.\\
\end{eqnarray*}

\end{theo}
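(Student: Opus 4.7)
The plan is to invoke Proposition 4.6 together with the fact that $V_{\H}(\ell,0)$ is strongly generated as a vertex algebra by $H_{4}=V_{\H}(\ell,0)_{(1)}$ modulo the commutation relations (3.5) at level $\ell$. By a standard lifting argument, constructing a vertex algebra homomorphism $\Phi:V_{\H}(\ell,0)\to V_{\A}\ot M(1)$ amounts to exhibiting vectors $u_h\in V_{\A}\ot M(1)$ for $h\in\{a,b,c,d\}$ whose associated vertex operators coincide with the prescribed fields and satisfy the same $\H$-commutation relations at level $\ell$. The vectors are read off directly (for instance, $u_{a}=\beta(-1)\1$, $u_{c}=p(-1)\1$, and the $\beta\gamma$ and $p,q$ normal-ordering conventions make the remaining two unambiguous). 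Hence the theorem reduces to verifying the ten commutators $[\Phi(h_1)(x_1),\Phi(h_2)(x_2)]$ on $V_{\A}\ot M(1)$.

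Seven of the ten pairs are immediate. The pairs $(a,a),(a,c),(b,b),(b,c),(c,c)$ vanish because $[\beta(x_1),\beta(x_2)]=[\gamma(x_1),\gamma(x_2)]=[p(x_1),p(x_2)]=0$ and because fields in $V_{\A}$ commute with fields in $M(1)$. The pair $(a,b)$ uses $[\beta(x_1),\gamma(x_2)]=x_2^{-1}\de(x_1/x_2)$ and $\pa_{x_2}(x_2^{-1}\de)=-\pa_{x_1}(x_2^{-1}\de)$ to produce $\Phi(c)(x_2)\,x_2^{-1}\de(x_1/x_2)-\ell\pa_{x_1}(x_2^{-1}\de(x_1/x_2))$, matching $[a,b]=c$ and $(a,b)=1$. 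The pair $(a,d)$ picks up only the $-\NO\beta\gamma\NO$ part of $\Phi(d)$ and produces $-\Phi(a)(x_2)\,x_2^{-1}\de(x_1/x_2)$, matching $[a,d]=-a$. The pair $(c,d)$ reduces to $[p(x_1),\ell q(x_2)]=-\ell\pa_{x_1}(x_2^{-1}\de(x_1/x_2))$, matching $(c,d)=1$.

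The pair $(b,d)$ requires a brief bookkeeping step. Its three non-trivial contributions yield distributions of the form $\gamma(x_i)\pa_{x_1}(x_2^{-1}\de)$ for $i=1,2$ together with $p(x_2)\gamma(x_2)\,x_2^{-1}\de$. Applying the delta identity $\gamma(x_2)\pa_{x_1}(x_2^{-1}\de(x_1/x_2))-\gamma(x_1)\pa_{x_1}(x_2^{-1}\de(x_1/x_2))=\gamma'(x_2)\,x_2^{-1}\de(x_1/x_2)$, the sum regroups into $\bigl(\ell\gamma'(x_2)+p(x_2)\gamma(x_2)\bigr)\,x_2^{-1}\de(x_1/x_2)=\Phi(b)(x_2)\,x_2^{-1}\de(x_1/x_2)$, matching $[b,d]=b$ with $(b,d)=0$.

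The main obstacle is $[\Phi(d)(x_1),\Phi(d)(x_2)]=0$, an anomaly cancellation. The Heisenberg cross-terms produce $[\ell q(x_1),\tfrac{1}{2}\ell^{-1}p(x_2)]+[\tfrac{1}{2}\ell^{-1}p(x_1),\ell q(x_2)]=-\pa_{x_1}(x_2^{-1}\de(x_1/x_2))$. The self-commutator of the $\beta\gamma$ ghost current $J(x):=\NO\beta(x)\gamma(x)\NO$, computed by Wick expansion (the two single-contraction terms cancel for bosons, and the double-contraction contributes $-(z-w)^{-2}$ to the OPE), gives $[J(x_1),J(x_2)]=\pa_{x_1}(x_2^{-1}\de(x_1/x_2))$, which is preserved under $[-J(x_1),-J(x_2)]$. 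These two anomalous distributions cancel exactly; this cancellation both forces and explains the coefficient $\tfrac{1}{2}\ell^{-1}$ of $p(x)$ in the formula for $\Phi(d)$. Careful normal-ordering bookkeeping in the $\beta\gamma$ self-commutator is the principal technical step; once completed, all ten commutation relations hold and the homomorphism $\Phi$ is established.
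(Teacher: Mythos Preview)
Your proposal is correct and follows essentially the same approach as the paper: both reduce the construction of $\Phi$ to a direct verification of the $\H$-commutation relations among the prescribed fields, checking the trivial pairs, the $(a,b)$, $(c,d)$, $(a,d)$, $(b,d)$ brackets, and the anomaly cancellation in $[d(x_1),d(x_2)]$ via the same delta-function identity. The only difference is organizational---you group the easy cases and give a Wick-expansion gloss on the $\NO\beta\gamma\NO$ self-commutator, whereas the paper writes out each bracket in turn---but the substance is identical.
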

\begin{proof}
It suffices to prove the following commutation relations:
\begin{eqnarray*}
&&[d(x_1),d(x_2)]\\
&=&[\ell
q(x_1)+\frac{1}{2}\ell^{-1}p(x_1)-\NO\be(x_1)\ga(x_1)\NO,\ell
q(x_2)+\frac{1}{2}\ell^{-1}p(x_2)
-\NO\be(x_2)\ga(x_2)\NO]\\
&=&\frac{1}{2}[q(x_1),p(x_2)]+\frac{1}{2}[p(x_1),q(x_2)]+[\NO\be(x_1)\ga(x_1)\NO,\NO\be(x_2)\ga(x_2)\NO]\\
&=&-\frac{\pa}{\pa x_1}x_2^{-1}\de\left(\frac{x_1}{x_2}\right)+\frac{\pa}{\pa x_1}x_2^{-1}\de\left(\frac{x_1}{x_2}\right)\\
&=&0.
\end{eqnarray*}

\begin{eqnarray*}
[a(x_1),b(x_2)]
&=&[\be(x_1),\ell\ga'(x_2)+ p(x_2)\ga(x_2)]\\
&=&\ell[\be(x_1),\ga'(x_2)]+ p(x_2)[\be(x_1),\ga(x_2)]\\
&=& -\ell\frac{\pa}{\pa x_1}x_2^{-1}\de\left(\frac{x_1}{x_2}\right)
+p(x_2)x_2^{-1}\de\left(\frac{x_1}{x_2}\right)\\
&=& c(x_1)x_2^{-1}\de\left(\frac{x_1}{x_2}\right)-\ell\frac{\pa}{\pa
x_1}x_2^{-1}\de\left(\frac{x_1}{x_2}\right).
\end{eqnarray*}

\begin{eqnarray*}
[c(x_1),d(x_2)]&=&[p(x_1),\ell q(x_2)+\frac{1}{2}\ell^{-1}p(x_2)-\NO\be(x_2)\ga(x_2)\NO]\\
&=&\ell[p(x_1),q(x_2)] =-\ell \frac{\pa}{\pa
x_1}x_2^{-1}\de\left(\frac{x_1}{x_2}\right).
\end{eqnarray*}

\begin{eqnarray*}
&&[d(x_1),a(x_2)]\\
&=&[\ell q(x_1)+\frac{1}{2}\ell^{-1}p(x_1)-\NO\be(x_1)\ga(x_1)\NO,\be(x_2)]\\
&=& -a(x_1)[\ga(x_1),\be(x_2)]\\
&=&a(x_2)x_2^{-1}\de\left(\frac{x_1}{x_2}\right).
\end{eqnarray*}

\begin{eqnarray*}
&&[d(x_1),b(x_2)]\\
&=&[\ell q(x_1)+\frac{1}{2}\ell^{-1}p(x_1)-\NO\be(x_1)\ga(x_1)\NO,\ell\ga'(x_2)+ p(x_2)\ga(x_2)]\\
&=&\ell[q(x_1),p(x_2)]\ga(x_2)-\ell[\be(x_1),\ga'(x_2)]\ga(x_1)-\ga(x_1)p(x_2)[\be(x_1),\ga(x_2)]\\
&=&-\ell\frac{\pa}{\pa
x_1}x_2^{-1}\de\left(\frac{x_1}{x_2}\right)\ga(x_2)
+\ell\left(\frac{\pa}{\pa x_1}x_2^{-1}\de\left(\frac{x_1}{x_2}\right)\right)\ga(x_1)-\ga(x_1)p(x_2)[\be(x_1),\ga(x_2)]\\
&=&-\ell\frac{\pa}{\pa
x_1}x_2^{-1}\de\left(\frac{x_1}{x_2}\right)\ga(x_2)
+\ell\frac{\pa}{\pa
x_1}x_2^{-1}\de\left(\frac{x_1}{x_2}\right)\ga(x_2)-\ell
x_2^{-1}\de\left(\frac{x_1}{x_2}\right)\ga'(x_2)\\
&&-\ga(x_1)p(x_2)x_2^{-1}\de\left(\frac{x_1}{x_2}\right)\\
&=&-(\ell\ga'(x_2)+\ga(x_2)p(x_2))
x_2^{-1}\de\left(\frac{x_1}{x_2}\right)\\
&=&-b(x_2)x_2^{-1}\de\left(\frac{x_1}{x_2}\right).
\end{eqnarray*}

Here we use the following fact:

\begin{eqnarray*}
&&\left(\frac{\pa}{\pa x_1}x_2^{-1}\de\left(\frac{x_1}{x_2}\right)\right)\ga(x_1)\\
&=&\ga(x_2)\frac{\pa}{\pa
x_1}x_2^{-1}\de\left(\frac{x_1}{x_2}\right)-
\ga'(x_2)x_2^{-1}\de\left(\frac{x_1}{x_2}\right).
\end{eqnarray*}

\begin{eqnarray*}
[c(x_1),a(x_2)]=[c(x_1),b(x_2)]=[p(x_1),\be(x_2)]=0.
\end{eqnarray*}

\end{proof}

\begin{rem} \ (1) $
\Phi$ is not surjective, since $\ga(0){\bf 1}$ has no preimage.

(2) \  In some physics literature\cite{CFS,KK}, several different
Wakimoto type modules for $\H$ were given.
\end{rem}

As a consequence, we have the{\it Wakimoto type modules} over $\H$:

\begin{cor}
For $\al\in\mh$, $V_{\A}\ot M(1,\al)$ is a module for the vertex
algebra $V_{\H}(\ell,0)$ and an $\H$-module of level $\ell$.
\end{cor}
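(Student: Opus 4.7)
The plan is to assemble the corollary as a direct consequence of three ingredients already in the paper: the Wakimoto-type vertex algebra homomorphism $\Phi$ from Theorem \ref{theorem4.4}, the standard fact that tensor products of modules over tensor products of vertex algebras are again modules, and Proposition \ref{VAM} that translates $V_{\H}(\ell,0)$-modules into restricted $\H$-modules of level $\ell$.

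First I would note that $V_{\A}$ is naturally a module over itself (it is a vertex algebra), and that by the theorem of Frenkel--Lepowsky--Meurman recalled above, $M(1,\al)$ is an irreducible module over the Heisenberg vertex operator algebra $M(1)$ for every $\al\in\mh$. Hence the tensor product $V_{\A}\otimes M(1,\al)$ carries a natural module structure over the tensor product vertex algebra $V_{\A}\otimes M(1)$, with vertex operator map
\[
Y_{V_{\A}\otimes M(1,\al)}(u\otimes v,x) \;=\; Y_{V_{\A}}(u,x)\otimes Y_{M(1,\al)}(v,x).
\]

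Next, I would pull this structure back along the vertex algebra homomorphism $\Phi: V_{\H}(\ell,0)\to V_{\A}\otimes M(1)$ constructed in Theorem \ref{theorem4.4}. Concretely, for $v\in V_{\H}(\ell,0)$ one sets
\[
Y_{V_{\A}\otimes M(1,\al)}(v,x) \;:=\; Y_{V_{\A}\otimes M(1,\al)}(\Phi(v),x),
\]
which, since $\Phi$ preserves the vacuum and intertwines vertex operators, endows $V_{\A}\otimes M(1,\al)$ with the structure of a $V_{\H}(\ell,0)$-module.

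Finally, I would invoke Proposition \ref{VAM}: any module for the vertex algebra $V_{\H}(\ell,0)$ is automatically a restricted $\H$-module of level $\ell$, with the action of $h\in H_{4}$ recovered as the coefficient of $x^{-1}$ in $Y_{V_{\A}\otimes M(1,\al)}(h,x)$. Unpacking via $\Phi$, the generating series $a(x), b(x), c(x), d(x)$ act through the explicit Wakimoto-type formulas of Theorem \ref{theorem4.4}, which makes the $\H$-action entirely concrete. No step presents a genuine obstacle; the only thing to be a little careful about is that $\Phi$ does not need to be surjective (as noted in the remark) for the pullback to produce a well-defined module structure, so nothing further is required.
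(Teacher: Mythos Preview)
Your proposal is correct and follows essentially the same approach as the paper: form the tensor product module $V_{\A}\otimes M(1,\al)$ over $V_{\A}\otimes M(1)$, pull back along the homomorphism $\Phi$ of Theorem \ref{theorem4.4} to obtain a $V_{\H}(\ell,0)$-module, and then invoke Proposition \ref{VAM} to get the restricted $\H$-module of level $\ell$. Your version is slightly more explicit about the pullback construction and the irrelevance of surjectivity of $\Phi$, but the argument is the same.
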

\begin{proof}
Since $M(1,\al)$ is a module for $M(1,0)$, it follows that
$V_{\A}\ot M(1,\al)$ is naturally a module for the vertex algebra
$V_{\A}\ot M(1,0)$. Then by  Theorem \ref{theorem4.4}, $V_{\A}\ot
M(1,\al)$ is a $V_{\H}(\ell,0)$-module. By Proposition \ref{VAM},
$V_{\A}\ot M(1,\al)$ is also an $\H$-module of level $\ell$.

\end{proof}

\begin{rem}\label{remark4.4}
If $(\alpha, p)=0$, then $V_{\A}\ot M(1,\al)$ contains an
irreducible submodule ismorphic to $V_{\H}(\ell,{\d})$ for some
${\d}\in\C$. If $(\alpha, p)\neq 0$, then $V_{\A}\ot M(1,\al)$ is
isomorphic to $V_{\H}(\ell, {\cc}, {\d})$ for some ${\cc}\in\C^{*}$,
${\d}\in\C$.
\end{rem}

\end{document}